\newtheorem{theorem}{Theorem}
\newtheorem{proposition}{Proposition}
\newtheorem{remark}{Remark}
\newtheorem{definition}{Definition}
\newtheorem{lemma}{Lemma}
\newcommand{\Lin}{\operatorname{Lin}}
\newcommand{\const}{\operatorname{const}}
\newcommand{\Ad}{\operatorname{Ad}}
\newcommand{\ad}{\operatorname{ad}}
\newcommand{\diam}{\operatorname{diam}}
\newcommand{\I}{\operatorname{I}}
\newcommand{\Exp}{\operatorname{Exp}}
\newcommand{\Arccos}{\operatorname{arccos}}
\begin{document}
\begin{flushleft}
UDK 519.46 + 514.763 + 512.81 + 519.9 + 517.911
\end{flushleft}
\begin{flushleft}
MSC 22E30, 49J15, 53C17
\end{flushleft}

\title[group $SO(3)$]{(Locally) shortest arcs of special sub-Riemannian metric on the Lie group $SO(3)$}
\author{V.\,N.\,Berestovskii, I.\,A.\,Zubareva,}
\thanks{The first author  was partially supported by the Russian Foundation for Basic Research (Grant 14-01-00068-a) and a grant of the Government of the Russian Federation for the State Support of Scientific Research (Agreement \No 14.B25.31.0029)}
\address{V.N.Berestovskii}
\address{The Sobolev Institute of Mathematics SD RAS, Novosibirsk, Russia}
\email{vberestov@inbox.ru}

\address{I.A.Zubareva}
\address{Omsk Department of the Sobolev Institute of Mathematics, Omsk, Russia}
\email{i\_gribanova@mail.ru}
\maketitle
\maketitle {\small
\begin{quote}
\noindent{\sc Abstract.}
The authors find geodesics, shortest arcs, diameter, cut locus, and conjugate sets for left-invariant sub-Riemannian metric on the Lie group $SO(3)$, under condition that the metric is right-invariant relative to the
Lie subgroup $SO(2)\subset SO(3).$
\end{quote}}

{\small
\begin{quote}
\textit{Keywords and phrases:} geodesic, left-invariant sub-Riemannian metric, Lie algebra, Lie group, shortest arc.
\end{quote}}

\section*{Introduction}

In paper \cite{BerZ} are found exact shapes of sheres of special left-invariant sub-Rieman\-nian metric $d$ on three-dimensional Lie groups: Heisenberg group
$H,$ $SO(3)$ and $SL_2(\mathbb{R}).$

In the last two cases one can give the following natural geometric description of the metric $d.$ The Lie groups $SO(3)$ and $SL_2(\mathbb{R})/\pm E_2$ can be
interpreted as transitive groups of preserving orientation isometries of unit euclidean sphere $S^2$ in three-dimensional Euclidean space and of the Lobachevskii plane $L^2$ with Gaussian curvature $-1$ and hence as spaces $S^2_1$ and $L^2_1$ of unit tangent vectors over these surfaces. The spaces $S^2_1$ and $L^2_1$ admit  Riemannian metric (scalar product) $g_1$ by Sasaki  (see \cite{Sas} or section $1K$ in Besse book \cite{Bes}). In addition, canonical projections
$p:(S^2_1,g_1)\rightarrow S^2$ and $p: (L^2_1,g_1)\rightarrow L^2$ (or, which is equivalent, $p:SO(3)\rightarrow SO(3)/SO(2)$ and
$p: SL_2(\mathbb{R})/\pm E_2\rightarrow  SL_2(\mathbb{R})/SO(2)$ are \textit{Riemannian submersions} \cite{Bes}. The metric $d$ is defined by (totally nonholonomic) left-invariant distribution $D$ on $SO(3)$ and $SL_2(\mathbb{R})/\pm E_2,$ which is orthogonal to fibers of Riemannian submersion $p,$ and restriction of scalar product $g_1$ to $D.$

Moreover, canonical projections
\begin{equation}
\label{subm}
p: (SO(3),d)\rightarrow  S^2,\quad p: SL_2(\mathbb{R})/\pm E_2\rightarrow L^2
\end{equation}
are \textit{submetries} \cite{BG}, natural generalizations of Riemannian submersion. The distribution $D$ on $S^2_1$ and $L^2_1$ is nothing other than the restriction to $S^2_1$ and $L^2_1$ of horizontal distribution of Levi-Civita connection \cite{Bes} for $S^2$ and $L^2.$
Therefore under mentioned identifications of $SO(3)$ and $SL_2(\mathbb{R})/\pm E_2$ with $S^2_1$ and $L^2_1,$ any smooth path $c=c(t), 0\leq t \leq t_1,$ in $SO(3)$ and $SL_2(\mathbb{R})/\pm E_2,$ tangent to the distribution $D,$ is realized as parallel translation of the vector $c(0)\in S^2_1$ and $c(0)\in L^2_1$ along projection $p(c(t)), 0\leq t \leq t_1.$

It follows from here and the Gauss-Bonnet theorem \cite{Pog} for $S^2$ and $L^2$ that canonical projection (to the base of fibration-submersion) of a geodesic in
$(SO(3),d)$ or $(SL_2(\mathbb{R})/\pm E_2,d)$ must be a solution of Dido's
isoperimetric problem (\textit{isoperimetrix}) on the base $S^2$ or $L^2,$ while
a geodesic is a horizontal lift of an isoperimetrix in $S^2$ or $L^2.$ Using this fact, submetries (\ref{subm}) and the suggestion that an isoperimetrix in $S^2$ or
$L^2$ must have constant geodesic curvature, the authors of paper
\cite{BerZ} deduced exact shapes of spheres without searching geodesics and shortest arcs.

In this paper, with the help of mentioned interpretation of geodesics, general methods of paper \cite{Ber1}, and the Gauss-Bonnet theorem for $S^2,$ we find geodesics, shortest arcs, the diameter, cut locus, and conjugate sets in
$(SO(3),d).$ Formulas, analogous to (\ref{sol}) and (\ref{geod}), are obtained in paper \cite{BR}, but we apply other methods and give detailed proofs.

\section{Preliminaries}

Let us recall that the Lie group $Gl(n)=Gl(\mathbb{R}^n)$ consists of all real
$(n\times n)-$matrices $g=(g_{ij}),$ $i,j=1,\dots n$, such that
$\det g \neq 0,$ and the Lie subgroup $Gl_0(n)$ (the connected component
of the unit $e$ in $Gl(n)$) is defined by condition $\det g > 0.$ It is naturally to consider both groups as open submanifolds in $\mathbb{R}^{n^2}$ with coordinates $g_{ij},$ $i,j=1,\dots n.$

Their Lie algebra $\frak{gl}(n)=Gl(n)_e:=Gl_0(n)_e=\mathbb{R}^{n^2}$ is the set of all real $(n\times n)$-matrices with usual structure of vector space and Lie bracket
\begin{equation}
\label{br}
[a,b]= ab - ba;\,\,a,b \in \frak{gl}(n).
\end{equation}
Let \textit{$e_{ij}\in \frak{gl}(n),$ $i,j=1,\dots n$, be a matrix which has 1 in $i$-th row and $j$-th column and 0 in all other places. $\Lin(a,b)$ denotes linear span of vectors $a,b$}. \textit{As an auxiliary tool we shall use standard scalar product $(\cdot,\cdot)$ on the Lie algebra $\frak{gl}(n)=\mathbb{R}^{n^2}$ for $n=3$}. By definition, the Euclidean space $E^n$ is $\mathbb{R}^n$ with standard
scalar product $(x,y)=x^Ty,$ where $x, y\in \mathbb{R}^n$ are regarded
as vector-columns and \textit{$^T$ denotes here and later the transposition
of matrices.}

The Lie group $SO(n)=O(n)\cap Gl_0(n)$ of all orthogonal matrices with the determinant 1 is a connected Lie subgroup in в $Gl_0(n).$ Its Lie algebra $(\frak{so}(n),[\cdot,\cdot])$ is a Lie subalgebra of the Lie algebra
$(\frak{gl}(n),[\cdot,\cdot]),$ consisting of all skew-symmetric matrices.

Let $G$ and $H$ be Lie groups with Lie algebras $\frak{g}$ and $\frak{h}$;
$\phi: G\rightarrow H$ is a Lie groups homomorphism. Then
\begin{equation}
\label{fe}
\phi \circ \exp_{\frak{g}} = \exp_{\frak{h}}\circ d\phi_e,
\end{equation}
moreover,
\begin{equation}
\label{dfe}
d\phi_e : (\frak{g},[\cdot,\cdot]) \rightarrow (\frak{h},[\cdot,\cdot])
\end{equation}
is a Lie algebra homomorphism (see lemma 1.12 in \cite{Hel}). If
$g_0\in G$ then $\I(g_0): G\rightarrow G,$ where $\I(g_0)(g)=g_0gg_0^{-1}$ is inner automorphism of the Lie group $G.$ Consequently,
$\Ad(g_0):=d\I(g_0)_e\in Gl(\frak{g})$ is automorphism of the Lie algebra
$\frak{g}$ and $d\Ad_e(v):=\ad(v):=[v,\cdot]$ for $v\in \frak{g}$ \cite{Hel}. Therefore, on the ground of formula (\ref{fe}),
\begin{equation}
\label{I}
\I(g_0)\circ \exp= \exp \circ \Ad(g_0),
\end{equation}
\begin{equation}
\label{ad}
\Ad(\exp_{\frak{g}}(v))= \exp_{\frak{gl}(\frak{g})}(\ad(v)), v \in \frak{g}.
\end{equation}

In case of left-invariant sub-Riemannian metrics on Lie groups, every geodesic
is a left shift of some geodesic which starts at the unit. Thus later we shall consider only geodesics with unit origin. Theorem 5 in paper \cite{Ber1} implies
the following theorem.

\begin{theorem}
\label{general}
Let $G$ be a connected Lie subgroup of the Lie group $SO(n)\subset Gl_0(n)$
with the Lie algebra $\frak{g},$ $D$ is totally nonholonomic left-invariant distribution on $G,$ a scalar product $\langle \cdot,\cdot\rangle$ on $D(e)$ is proportional to restriction of the scalar product $(\cdot,\cdot)$ (to $D(e)$).
Then parametrized by arclength normal geodesic (i.e. locally shortest arc)
$\gamma=\gamma(t),$ $t\in (-a,a)\subset \mathbb{R},$ $\gamma(0)=e,$ on $(G,d)$ with left-invariant sub-Riemannian metric $d$, defined by distribution $D$ and scalar product $\langle\cdot,\cdot\rangle$ on $D(e),$ satisfies the system of ordinary differential equations
\begin{equation}
\label{dxsp2}
\stackrel{\cdot}\gamma(t)=\gamma(t)u(t),\,\,u(t)\in D(e)\subset \frak{g},\,\,\langle u(t),u(t)\rangle\equiv 1,
\end{equation}
\begin{equation}
\label{vot3}
\stackrel{\cdot}u(t)+\stackrel{\cdot}v(t) = -[u(t),v(t)],
\end{equation}
where $u=u(t),$ $v=v(t)\in \frak{g},$ $(v(t),D(e))\equiv 0,$ $t\in (-a,a)\subset \mathbb{R},$ are some real-analytic vector functions.
\end{theorem}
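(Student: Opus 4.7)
The plan is to specialize Theorem~5 of \cite{Ber1} to our setting. That theorem packages the Pontryagin Maximum Principle for sub-Riemannian geodesics on a Lie subgroup of $Gl_0(n)$: it produces a horizontal control $u(t)$ and a left-trivialized costate $\xi(t)\in\frak{g}^*$ satisfying an adjoint equation. Equation~(\ref{dxsp2}) will then be purely kinematic, while~(\ref{vot3}) will come from the adjoint equation rewritten through the bi-invariant inner product on $\frak{g}$.

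For~(\ref{dxsp2}), left-invariance of $D$ combined with the fact that, on the matrix group $G\subset Gl_0(n)$, left translation $L_{\gamma(t)}$ is just left multiplication by $\gamma(t)$, gives $\stackrel{\cdot}\gamma(t)=\gamma(t)u(t)$ with $u(t):=\gamma(t)^{-1}\stackrel{\cdot}\gamma(t)\in D(e)$; arclength parametrization relative to $\langle\cdot,\cdot\rangle$ forces $\langle u(t),u(t)\rangle\equiv 1$. For~(\ref{vot3}), Theorem~5 of \cite{Ber1} yields a real-analytic costate $\xi(t)\in\frak{g}^*$ along a normal geodesic, satisfying a Hamiltonian equation of the form $\stackrel{\cdot}\xi=\ad^*_{u}\xi$, together with the maximum condition that identifies $u(t)$ with the horizontal projection of $\xi(t)$ under $\langle\cdot,\cdot\rangle$. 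I would then use the ambient scalar product $(\cdot,\cdot)$ to identify $\frak{g}^*\cong\frak{g}$; under this identification, the image of $\xi(t)$ admits a unique orthogonal decomposition $u(t)+v(t)$ with $u(t)\in D(e)$ and $(v(t),D(e))\equiv 0$, the proportionality factor between $\langle\cdot,\cdot\rangle$ and $(\cdot,\cdot)|_{D(e)}$ being absorbed so that the horizontal component of the dual vector genuinely coincides with the control $u(t)$.

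The critical ingredient is that $(\cdot,\cdot)|_{\frak{g}}$, for $\frak{g}\subset\frak{so}(n)$, equals $-\trace(XY)$ and is therefore $\Ad(G)$-invariant, hence bi-invariant; equivalently, every operator $\ad_w$ is skew-symmetric relative to $(\cdot,\cdot)$, so $\ad^*_u=-\ad_u$. Transporting $\stackrel{\cdot}\xi=\ad^*_u\xi$ to $\frak{g}$ then yields $\stackrel{\cdot}u+\stackrel{\cdot}v=-[u,u+v]=-[u,v]$, which is exactly~(\ref{vot3}); the asserted real-analyticity of $u$ and $v$ is inherited from the analyticity of the Hamiltonian flow on $T^*G$. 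The main obstacle I anticipate is the bookkeeping of conventions—left versus right trivialization of $T^*G$, the sign of $\ad^*$, and the exact proportionality between $\langle\cdot,\cdot\rangle$ and $(\cdot,\cdot)|_{D(e)}$—which must all be tracked with care so that the abstract output of Theorem~5 of \cite{Ber1} produces precisely the sign and bracket on the right-hand side of~(\ref{vot3}).
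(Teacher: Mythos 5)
Your proposal is correct and follows essentially the same route as the paper, whose entire proof of this theorem is the single remark that Theorem~5 of \cite{Ber1} implies it. Your reconstruction of how that specialization works --- the kinematic identity $\stackrel{\cdot}\gamma=\gamma u$ from left trivialization, the PMP costate equation, the identification $\frak{g}^*\cong\frak{g}$ via the trace form, and the $\Ad(SO(n))$-invariance of $(\cdot,\cdot)$ on $\frak{so}(n)$ giving $\ad^*_u=-\ad_u$ and hence $\stackrel{\cdot}u+\stackrel{\cdot}v=-[u,v]$ --- is sound, including the absorption of the proportionality constant between $\langle\cdot,\cdot\rangle$ and $(\cdot,\cdot)|_{D(e)}$ into $v$.
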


\section{Geodesics of special left-invariant sub-Riemannian metric on the Lie group  $SO(3)$}

\begin{theorem}
\label{main}
Let be given the basis
\begin{equation}
\label{abc}
a=e_{21}-e_{12},\quad  b=e_{31}-e_{13},\quad c=e_{32}-e_{23}
\end{equation}
of the Lie algebra $\frak{so}(3),$ $D(e)=\Lin(a,b),$ and scalar product $\langle\cdot,\cdot\rangle$ on $D(e)$
with orthonormal basis $a,b.$ Then left-invariant distribution $D$ on the Lie group
$SO(3)$ with given $D(e)$ is totally nonholonomic and the pair
$(D(e),\langle\cdot,\cdot\rangle)$ defines left-invariant sub-Riemannian metric $d$ on $SO(3).$ Moreover, any parametrized by arclength geodesic $\gamma=\gamma(t),$
$t\in \mathbb{R},$ in $(SO(3),d)$ with condition $\gamma(0)=e$ is a product of two 1-parameter subgroups:
\begin{equation}
\label{sol}
\gamma(t)=\exp(t(\cos\phi_0a + \sin \phi_0b +\beta c)) \exp(-t\beta c),
\end{equation}
where $\phi_0,$ $\beta$ are some arbitrary constants.
\end{theorem}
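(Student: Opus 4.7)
First, observe that by direct computation in $\mathfrak{gl}(3)$ one obtains $[a,b]=c$, $[b,c]=a$, $[c,a]=b$; hence $D(e)=\Lin(a,b)$ together with $[D(e),D(e)]=\Lin(c)$ spans $\mathfrak{so}(3)$, so $D$ is totally nonholonomic and the pair $(D(e),\langle\cdot,\cdot\rangle)$ defines a left-invariant sub-Riemannian metric $d$ on $SO(3)$. To study a parametrized-by-arclength geodesic $\gamma$ with $\gamma(0)=e$, I would apply Theorem \ref{general} and decompose the functions appearing there as $u(t)=x(t)a+y(t)b$ with $x(t)^2+y(t)^2\equiv1$ (the unit-length condition in $D(e)$) and $v(t)=\beta(t)c$ (the orthogonality of $v$ to $D(e)$ with respect to the standard scalar product, which forces $v$ into $\Lin(c)$).

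Next I would substitute these into the adjoint equation (\ref{vot3}). Using the brackets above (equivalently $[a,c]=-b$, $[b,c]=a$) one gets $[u(t),v(t)]=\beta(t)\bigl(y(t)a-x(t)b\bigr)$, and matching the $a$-, $b$-, $c$-coefficients in (\ref{vot3}) yields the scalar system
\[
\dot x=-\beta y,\qquad \dot y=\beta x,\qquad \dot\beta=0.
\]
Hence $\beta\equiv\beta_0$ is constant and $(x,y)$ rotates uniformly with angular velocity $\beta_0$; setting $(x(0),y(0))=(\cos\phi_0,\sin\phi_0)$ gives
\[
u(t)=\cos(\beta_0 t+\phi_0)\,a+\sin(\beta_0 t+\phi_0)\,b.
\]

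The final step is to integrate the group equation $\dot\gamma=\gamma u(t)$ with $\gamma(0)=e$. I would verify directly that the candidate $\eta(t):=\exp(tX)\exp(-t\beta_0 c)$, with $X=\cos\phi_0\,a+\sin\phi_0\,b+\beta_0 c$, solves this Cauchy problem, so that by uniqueness $\gamma=\eta$, which is (\ref{sol}). Differentiating and simplifying one obtains
\[
\eta(t)^{-1}\dot\eta(t)=\Ad(\exp(t\beta_0 c))\,X-\beta_0 c.
\]
By (\ref{I})--(\ref{ad}), $\Ad(\exp(t\beta_0 c))=\exp(t\beta_0\ad(c))$; since $\ad(c)$ sends $a\mapsto b$, $b\mapsto-a$ and annihilates $c$, the operator $\exp(t\beta_0\ad(c))$ acts on $\Lin(a,b)$ as planar rotation by the angle $t\beta_0$ and fixes $c$. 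This rotation carries $\cos\phi_0\,a+\sin\phi_0\,b$ precisely to $u(t)$ and preserves $\beta_0 c$, so $\eta(t)^{-1}\dot\eta(t)=u(t)+\beta_0 c-\beta_0 c=u(t)$, as required.

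The main obstacle is that $\dot\gamma=\gamma u(t)$ is a nonautonomous right-invariant ODE --- $u(t)$ drifts around the unit circle in $D(e)$ at rate $\beta_0$ --- and so cannot be integrated by a single exponential. The decisive observation is that this drift is generated exactly by $\ad(c)$, so it can be undone by the auxiliary right factor $\exp(-t\beta_0 c)$; this is what both motivates the product-of-exponentials ansatz for $\eta$ and reduces the integration to exponentiating the constant Lie algebra element $X$.
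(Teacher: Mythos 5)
Your proof follows essentially the same route as the paper's: the same bracket computations establishing total nonholonomicity, the same decomposition of $u$ and $v$ (you use Cartesian components $x,y$ where the paper writes $\cos\phi(t),\sin\phi(t)$, which is immaterial), and the same verification that the product of two exponentials solves $\dot\gamma(t)=\gamma(t)u(t)$ via $\Ad(\exp(t\beta_0 c))=\exp(t\beta_0\ad(c))$ acting as a rotation of $\Lin(a,b)$ fixing $c$. All of your computations check out.

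There is, however, one logical step you skip. Theorem \ref{general} applies only to \emph{normal} geodesics, whereas the statement to be proved concerns \emph{any} arclength-parametrized geodesic. In sub-Riemannian geometry one must rule out strictly abnormal minimizers before concluding that every geodesic satisfies the normal equations (\ref{dxsp2})--(\ref{vot3}). The paper closes this gap by invoking Theorem 3 of \cite{Ber1}, which asserts that every geodesic of a left-invariant sub-Riemannian metric on a three-dimensional Lie group is normal (here the distribution $D$ is contact, so this is not a serious obstruction, but it must be stated). As written, your argument only classifies the normal geodesics; add the normality claim, with justification or a citation, and the proof is complete.
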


\begin{proof}
It follows from formulae (\ref{br}) and (\ref{abc}) that
\begin{equation}
\label{abca}
[a,b]=c,\quad [b,c]=a,\quad [c,a]= b.
\end{equation}
This implies the first statement of theorem.

It is clear that on $D(e)$
\begin{equation}
\label{scal}
\langle\cdot,\cdot\rangle =\frac{1}{2}(\cdot,\cdot).
\end{equation}
In consequence of theorem 3 in \cite{Ber1} every geodesic on 3-dimensional Lie group with left-invariant sub-Riemannian metric is normal. Then it follows from theorem \ref{general} that one can apply ODE
(\ref{dxsp2}),(\ref{vot3}) to find geodesics $\gamma=\gamma(t), t\in \mathbb{R},$ in $(SO(3),d)$.

It is clear that
\begin{equation}
\label{the}
u(t)=\cos\phi(t)a +\sin\phi(t)b,\quad v(t)=\beta(t)c,
\end{equation}
and the identity (\ref{vot3}) is written in the form
$$-[\cos\phi(t)a +\sin\phi(t)b, \beta(t)c]=
\stackrel{\cdot}\phi(t)(-\sin\phi(t)a +\cos\phi(t)b)+
\stackrel{\cdot}\beta(t)c.$$
In consequence of (\ref{abca}), expression in the left part of equality is equal to
$$\beta(t)(\cos\phi(t)b-\sin\phi(t)a).$$
We get identities $\stackrel{\cdot}\beta(t)=0,$
$\stackrel{\cdot}\phi(t)=\beta(t).$ Hence
\begin{equation}
\label{cond}
\beta=\beta(t)=\const,\quad \phi(t)=\beta t + \phi_0.
\end{equation}

In view of (\ref{dxsp2}), (\ref{the}), and (\ref{cond}), it must be
\begin{equation}
\label{dx}
\stackrel{\cdot}\gamma(t)=\gamma(t)(\cos (\beta t+\phi_0)a + \sin (\beta t+\phi_0)b).
\end{equation}
Let us prove that (\ref{sol}) is a solution of ODE (\ref{dx}).
One can easily deduce from formulae (\ref{abca}) equalities
\begin{equation}
\label{adm}
(\ad (c))=a,\quad (\ad (b))=-b, \quad (\ad (a))=c,
\end{equation}
where $(f)$ denotes the matrix of linear map
$f: \frak{so}(3)\rightarrow \frak{so}(3)$ in the base $a,b,c;$ later $(f)$ is identified with $f$. On the ground of formulae (\ref{ad}), (\ref{adm}), (\ref{cond}), (\ref{the}),
$$\stackrel{\cdot}\gamma(t)=\exp(t(\cos\phi_0a + \sin \phi_0b + \beta c))(\cos\phi_0a + \sin \phi_0b + \beta c)
\exp(-t\beta c)+$$
$$\gamma(t)(-\beta c)=\gamma(t)\exp(t\beta c)(\cos\phi_0a + \sin \phi_0b + \beta c)\exp(-t\beta c)
+\gamma(t)(-\beta c)=$$
$$\gamma(t)\exp(t\beta c)(\cos\phi_0a + \sin \phi_0b)\exp(-t\beta c)+\gamma(t)(\beta c)+\gamma(t)(-\beta c)=$$
$$\gamma(t)\cdot[\Ad(\exp(t\beta c))(\cos\phi_0a + \sin \phi_0b)]=\gamma(t)\cdot[\exp(\ad(t\beta c))(\cos\phi_0a + \sin \phi_0b)]=$$
$$\gamma(t)\cdot[\exp(t\beta(\ad(c)))(\cos\phi_0a + \sin \phi_0b)]=\gamma(t)\cdot[(\exp(t\beta a))(\cos\phi_0a + \sin \phi_0b)]=$$
$$\gamma(t)\cdot (\cos (\beta t+\phi_0)a + \sin (\beta t+\phi_0)b)=\gamma(t)u(t).$$
\end{proof}

\begin{remark}
Both 1-parameter subgroups from formula (\ref{sol}) are nowhere tangent to distribution $D$ for $\beta\neq 0$ so that any their interval has infinite length in metric $d.$
\end{remark}

\begin{remark}
\label{Agr}
On p. 258 in book \cite{AS}, A.A.Agrachev and Yu.L.Sachkov proved that, analogously
to formula (\ref{sol}), every \textit{normal trajectory} (geodesic) of left-invariant sub-Riemannian metric, defined by a distribution with corank 1, on a compact Lie group, starting at the unit, is a product of no more than two 1-parameter subgroups. Let us remind that any geodesic of left-invariant sub-Riemannian metric on 3-dimensional Lie group is normal.
\end{remark}

\begin{proposition}
\label{maint}
Let $\gamma(t),$ $t\in \mathbb{R},$ be geodesic in $(SO_0(2,1),d)$ defined by formula (\ref{sol}). Then for any $t_0\in \mathbb{R},$
\begin{equation}
\label{sol1}
\gamma(t_0)^{-1}\gamma(t)=\exp((t-t_0)(\cos(\beta t_0+\phi_0)a + \sin(\beta t_0+\phi_0)b +\beta c))
\exp(-(t-t_0)\beta c).
\end{equation}
\end{proposition}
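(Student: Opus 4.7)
My plan is to exploit left-invariance together with the uniqueness of solutions to the ODE (\ref{dxsp2}) satisfied by $\gamma$. Since $d$ is left-invariant, left-translation by $\gamma(t_0)^{-1}$ sends unit-speed geodesics to unit-speed geodesics, so $\tilde\gamma(s):=\gamma(t_0)^{-1}\gamma(t_0+s)$ is a parametrized-by-arclength geodesic in $(SO(3),d)$ passing through $e$ at $s=0$. By Theorem \ref{main} it therefore has the form
$$\tilde\gamma(s)=\exp(s(\cos\tilde\phi_0\,a+\sin\tilde\phi_0\,b+\tilde\beta c))\exp(-s\tilde\beta c)$$
for some constants $\tilde\phi_0,\tilde\beta$, and the task reduces to identifying $\tilde\phi_0=\beta t_0+\phi_0$, $\tilde\beta=\beta$.

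The more direct path, which I would actually write out, is a short algebraic manipulation of (\ref{sol}). Put $X:=\cos\phi_0\,a+\sin\phi_0\,b+\beta c$, so that $\gamma(t)=\exp(tX)\exp(-t\beta c)$. Then
\begin{align*}
\gamma(t_0)^{-1}\gamma(t)&=\exp(t_0\beta c)\exp(-t_0X)\exp(tX)\exp(-t\beta c)\\
&=\bigl[\exp(t_0\beta c)\exp((t-t_0)X)\exp(-t_0\beta c)\bigr]\exp(-(t-t_0)\beta c),
\end{align*}
where I used that $\exp$ is a homomorphism on the one-parameter subgroup of $X$ and then inserted $\exp(-t_0\beta c)\exp(t_0\beta c)=e$. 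By (\ref{I}) the bracketed factor equals $\exp((t-t_0)\Ad(\exp(t_0\beta c))X)$, and by (\ref{ad}) together with (\ref{adm}),
$$\Ad(\exp(t_0\beta c))=\exp(t_0\beta\,\ad(c))=\exp(t_0\beta\,a),$$
where the last expression is the $3\times 3$ matrix of $\ad(c)$ acting in the basis $(a,b,c)$. Since that matrix acts as planar rotation by angle $t_0\beta$ in the $(a,b)$-plane and trivially on $c$, applying it to $X$ gives
$$\Ad(\exp(t_0\beta c))X=\cos(\beta t_0+\phi_0)\,a+\sin(\beta t_0+\phi_0)\,b+\beta c,$$
which is precisely the generator appearing in (\ref{sol1}).

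The main obstacle is essentially bookkeeping: the symbol $a$ plays a double role as both a basis vector of $\frak{so}(3)$ and (via (\ref{adm})) the matrix $(\ad(c))$ of a linear operator on $\frak{so}(3)$, and one must verify that this latter matrix exponentiates to the advertised planar rotation. The cleanest way to handle it is to compute $\ad(c)(a)=[c,a]=b$, $\ad(c)(b)=[c,b]=-a$, $\ad(c)(c)=0$ directly from (\ref{abca}) and read off the rotation from the resulting skew-symmetric $2\times 2$ block. Once that identification is made, the chain of equalities above yields (\ref{sol1}) in two further lines.
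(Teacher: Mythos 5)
Your direct computation is exactly the paper's proof: the same regrouping of $\gamma(t_0)^{-1}\gamma(t)$ into $\bigl[\exp(t_0\beta c)\exp((t-t_0)X)\exp(-t_0\beta c)\bigr]\exp(-(t-t_0)\beta c)$ followed by the same application of (\ref{I}), (\ref{ad}), and (\ref{adm}) to rotate the $(a,b)$-components of $X$ by the angle $\beta t_0$. The argument is correct, and your verification that $(\ad(c))$ is the planar rotation generator $a$ is a sound (if implicit in the paper) piece of bookkeeping.
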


\begin{proof}
On the basis of formulae (\ref{I}), (\ref{ad}), (\ref{adm}),
$$\gamma(t_0)^{-1}\gamma(t)=\exp(t_0\beta c)\exp(-t_0(\cos\phi_0a + \sin\phi_0b +\beta c))\cdot$$
$$\exp(t(\cos\phi_0a + \sin \phi_0b +\beta c)) \exp(-t\beta c)=$$
$$\exp(t_0\beta c)\exp((t-t_0)(\cos\phi_0a + \sin \phi_0b +\beta c))\exp(-t_0\beta c) \exp(-(t-t_0)\beta c)=$$
$$[\I(\exp(t_0\beta c))(\exp((t-t_0)(\cos\phi_0a + \sin \phi_0b +\beta c)))]\cdot\exp(-(t-t_0)\beta c)=$$
$$\exp[\Ad(\exp(t_0\beta c)((t-t_0)(\cos\phi_0a + \sin \phi_0b +\beta c))]\cdot\exp(-(t-t_0)\beta c)=$$
$$\exp[\exp(\ad(t_0\beta c))((t-t_0)(\cos\phi_0a + \sin \phi_0b +\beta c))]\cdot\exp(-(t-t_0)\beta c)=$$
$$\exp[\exp(t_0\beta a)((t-t_0)(\cos\phi_0a + \sin \phi_0b +\beta c))]\cdot\exp(-(t-t_0)\beta c)=$$
$$\exp((t-t_0)(\cos(\beta t_0+\phi_0)a + \sin(\beta t_0+\phi_0)b +\beta c))\cdot\exp(-(t-t_0)\beta c).$$
\end{proof}

\begin{remark}
\label{change}
To change a sign of $\beta$ in (\ref{sol}) is the same as to change a sign of $t$
and to change the angle $\phi_0$ by angle $\phi_0\pm \pi.$
\end{remark}

\begin{remark}
\label{Ad}
For any matrix $B\in SO(2)=\exp(\mathbb{R}c),$ the map $l_B\circ r_{B^{-1}}$,
where $l_B$ is multiplication from the left by $B$, $r_{B^{-1}}$ is multiplication from the right by $B^{-1}$, is simultaneously automorphism $\Ad B$ of the Lie algebra
$(\frak{so}(3),[\cdot,\cdot]),$ preserving $\langle\cdot,\cdot\rangle,$ and automorphism of the Lie group $SO(3),$ preserving distribution $D$ and metric
$d.$ In particular in view of (\ref{ad}),
(\ref{adm})
$$\Ad B (a+\beta c)=\exp(\phi_0 a)(a+\beta c)=\cos\phi_0a + \sin \phi_0b + \beta c,$$ if
\begin{equation}
\label{AA}
B=\exp(\phi_0c)=\left(\begin{array}{ccc}
1 & 0 & 0\\
0 & \cos \phi_0 & -\sin \phi_0 \\
0 &   \sin \phi_0  & \cos \phi_0
\end{array}\right).
\end{equation}
\end{remark}

\begin{lemma}
\label{ee}
\begin{equation}\label{expx}
\exp(t(a+\beta c))=\I(\exp(-\xi b))(\exp(t\sqrt{1+\beta^2}a)),
\end{equation}
where
\begin{equation}
\label{cosi}
\cos\xi=\frac{1}{\sqrt{1+\beta^2}},\quad \sin\xi=\frac{\beta}{\sqrt{1+\beta^2}}.
\end{equation}
\end{lemma}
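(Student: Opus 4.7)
The plan is to apply the fundamental identity (\ref{I}) with $g_0=\exp(-\xi b)$, which rewrites the right-hand side of (\ref{expx}) as
$$\exp\bigl(\Ad(\exp(-\xi b))(t\sqrt{1+\beta^2}\,a)\bigr).$$
Thus the assertion (\ref{expx}) reduces to the Lie-algebra equality
$$t(a+\beta c)=\Ad(\exp(-\xi b))(t\sqrt{1+\beta^2}\,a),$$
which after cancelling $t$ becomes $a+\beta c=\sqrt{1+\beta^2}\,\Ad(\exp(-\xi b))(a)$.

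Next I would use (\ref{ad}) to rewrite $\Ad(\exp(-\xi b))=\exp(-\xi\,\ad(b))$, and determine how $\ad(b)$ acts on the plane $\Lin(a,c)$. From the commutation relations (\ref{abca}) one reads off $\ad(b)(a)=[b,a]=-c$ and $\ad(b)(c)=[b,c]=a$, so $\ad(b)^{2}$ equals $-\operatorname{id}$ on $\Lin(a,c)$, and $\exp(-\xi\,\ad(b))$ acts there as a standard planar rotation. A direct series expansion (using $\ad(b)^{2k}(a)=(-1)^k a$ and $\ad(b)^{2k+1}(a)=(-1)^{k+1}c$) yields
$$\exp(-\xi\,\ad(b))(a)=\cos\xi\cdot a+\sin\xi\cdot c.$$
Substituting the defining values $\cos\xi=1/\sqrt{1+\beta^2}$ and $\sin\xi=\beta/\sqrt{1+\beta^2}$ from (\ref{cosi}) and multiplying by $\sqrt{1+\beta^2}$ produces precisely $a+\beta c$, which closes the argument.

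There is no serious obstacle: the whole proof collapses to a few lines once the ingredients (\ref{I}), (\ref{ad}), (\ref{abca}), and (\ref{cosi}) are combined. The only care required is with signs, coming from $[b,a]=-c$ together with the minus sign in the exponent $-\xi\,\ad(b)$; these minus signs cancel to produce the positive trigonometric coefficients needed to match the definition of $\xi$. Conceptually, the content of the lemma is simply that the angle $\xi$ is chosen so that conjugation by $\exp(-\xi b)$ rotates the $a$-axis in $\Lin(a,c)$ onto the direction of the vector $a+\beta c$, rescaled by its norm $\sqrt{1+\beta^2}$.
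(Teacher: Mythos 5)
Your proof is correct and follows essentially the same route as the paper's: both arguments reduce (\ref{expx}) via (\ref{I}) and (\ref{ad}) to the Lie-algebra identity $a+\beta c=\sqrt{1+\beta^2}\,\Ad(\exp(-\xi b))(a)$, and both verify it by recognizing $\exp(-\xi\,\ad(b))$ as the rotation by $\xi$ in the plane $\Lin(a,c)$. The only cosmetic difference is that you expand the exponential series of $\ad(b)$ explicitly, whereas the paper invokes the matrix identification $(\ad(b))=-b$ from (\ref{adm}).
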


\begin{proof}
Taking into account (\ref{cosi}), (\ref{adm}), (\ref{ad}), we get
$$t(a+\beta c)=(t\sqrt{1+\beta^2}(\cos\xi\cdot a + \sin\xi\cdot c))=(\exp(\xi b))(t\sqrt{1+\beta^2}a)=$$
$$(\exp(\ad(-\xi b)))(t\sqrt{1+\beta^2}a)=\Ad(\exp(-\xi b))(t\sqrt{1+\beta^2}a).$$
Now in consequence of obtained equalities and (\ref{I}),
$$\exp(t(a+\beta c))=\exp(\Ad(-\xi b)(t\sqrt{1+\beta^2}a))=\I(\exp(-\xi b))(\exp(t\sqrt{1+\beta^2}a)).$$
\end{proof}

\begin{theorem}
\label{matrexp}
The geodesic $\gamma=\gamma(t)$ of left-invariant sub-Riemannian metric $d$ on the Lie group $SO(3),$ defined by formula (\ref{sol}), is equal to
\begin{equation}
\label{geod}
\tiny{\left(\begin{array}{ccc}
1-n & -m\cos{(\beta t+\phi_0)}-\beta n\sin{(\beta t+\phi_0)} & -m\sin{(\beta t+\phi_0)}+\beta n\cos{(\beta t+\phi_0)} \\
m\cos{\phi_0}-\beta n\sin{\phi_0} & (1-\beta^2n)\cos{\beta t}+\beta m\sin{\beta t}-n\cos{(\beta t+\phi_0)}\cos{\phi_0} &
(1-\beta^2n)\sin{\beta t}-\beta m\cos{\beta t}-n\sin{(\beta t+\phi_0)}\cos{\phi_0} \\
m\sin{\phi_0}+\beta n\cos{\phi_0} & \beta m\cos{\beta t}-(1-\beta^2n)\sin{\beta t}-n\cos{(\beta t+\phi_0)}\sin{\phi_0} &
(1-\beta^2n)\cos{\beta t}+\beta m\sin{\beta t}-n\sin{(\beta t+\phi_0)}\sin{\phi_0}
\end{array}\right),}
\end{equation}
where
\begin{equation}\label{mn}
m=\frac{\sin{(t\sqrt{1+\beta^2})}}{\sqrt{1+\beta^2}},\,\,n=\frac{1-\cos{(t\sqrt{1+\beta^2})}}{1+\beta^2}.
\end{equation}
\end{theorem}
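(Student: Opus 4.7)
The plan is to reduce the matrix exponential in (\ref{sol}) to a product of exponentials of the basis vectors $a,b,c$, each of which is a rotation about a coordinate axis of $\mathbb{R}^3$ with an explicit matrix form. First, using Remark~\ref{Ad}, I would write $\cos\phi_0\,a + \sin\phi_0\,b + \beta c = \Ad(B)(a+\beta c)$ with $B=\exp(\phi_0 c)$, so that formula (\ref{I}) gives
\[
\exp(t(\cos\phi_0\,a + \sin\phi_0\,b + \beta c)) = \exp(\phi_0 c)\,\exp(t(a+\beta c))\,\exp(-\phi_0 c).
\]
Lemma~\ref{ee} then factors $\exp(t(a+\beta c)) = \exp(-\xi b)\,\exp(\theta a)\,\exp(\xi b)$ with $\theta := t\sqrt{1+\beta^2}$. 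After combining the rightmost two factors via $\exp(-\phi_0 c)\exp(-t\beta c)=\exp(-(\phi_0+t\beta)c)$, formula (\ref{sol}) rewrites as
\[
\gamma(t) = \exp(\phi_0 c)\,\exp(-\xi b)\,\exp(\theta a)\,\exp(\xi b)\,\exp(-(\phi_0+t\beta)c).
\]

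Next, I would substitute the standard matrix forms of the three one-parameter subgroups --- each a rotation about a coordinate axis of $\mathbb{R}^3$, like the one displayed in (\ref{AA}) --- and multiply out. A convenient order is to compute first the central conjugate $M := \exp(-\xi b)\,\exp(\theta a)\,\exp(\xi b)$, whose entries are short expressions in $\cos\xi,\sin\xi,\cos\theta,\sin\theta$. Because $\exp(\phi_0 c)$ and $\exp(-(\phi_0+t\beta)c)$ both fix the first standard basis vector of $\mathbb{R}^3$ and rotate the other two, left-multiplying by $\exp(\phi_0 c)$ affects only the last two rows of $M$ and right-multiplying by $\exp(-(\phi_0+t\beta)c)$ affects only the last two columns; the first row and first column of $\gamma(t)$ can therefore be read off almost immediately.

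Finally, I would substitute $\cos\xi = 1/\sqrt{1+\beta^2}$ and $\sin\xi = \beta/\sqrt{1+\beta^2}$ from (\ref{cosi}), together with the equivalent forms $\cos\theta = 1-(1+\beta^2)n$ and $\sin\theta = \sqrt{1+\beta^2}\,m$ coming from the definitions (\ref{mn}), to turn every coefficient into a polynomial in $\beta$, $m$, $n$. Repeated application of the angle-addition formulae --- recognizing $(\phi_0+t\beta)-\phi_0=\beta t$ in order to collect terms into $\sin\beta t$, $\cos\beta t$, $\sin(\beta t+\phi_0)$, $\cos(\beta t+\phi_0)$ --- then matches the entries one by one to those of (\ref{geod}). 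The main obstacle is purely clerical: nine entries must be verified, and the four in the lower-right $2\times 2$ block each require expanding three nested rotations and regrouping roughly half a dozen trigonometric monomials, so the computation is lengthy but entirely mechanical.
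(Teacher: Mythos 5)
Your proposal is correct and follows essentially the same route as the paper: both reduce to the case $\phi_0=0$ via conjugation by $\exp(\phi_0 c)$ (using that $\exp(\phi_0 c)$ commutes with $\exp(-t\beta c)$), apply Lemma~\ref{ee} to write $\exp(t(a+\beta c))$ as a product of explicit coordinate rotations, and then carry out the mechanical matrix multiplication with the substitutions $\cos(t\sqrt{1+\beta^2})=1-(1+\beta^2)n$, $\sin(t\sqrt{1+\beta^2})=\sqrt{1+\beta^2}\,m$. The only cosmetic difference is that you conjugate the first exponential factor before multiplying out, while the paper computes $\gamma(t)\mid_{\phi_0=0}$ fully and then conjugates the whole product.
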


\begin{proof}
Let $\phi_0=0$. Then (\ref{sol}) takes the form
$$\gamma(t)\mid_{\phi_0=0}=\exp{(t(a+\beta c))}\exp{(-t\beta c)}.$$
Using lemma~\ref{ee}, (\ref{mn}) and carrying out routine calculations, we get
$$\exp{(t(a+\beta c))}=$$
$$\frac{1}{1+\beta^2}\left(\begin{array}{ccc}
1 & 0 & \beta \\
0 & \sqrt{1+\beta^2} & 0 \\
-\beta & 0 & 1
\end{array}\right)
\left(\begin{array}{ccc}
\cos t\sqrt{1+\beta^2} & -\sin t\sqrt{1+\beta^2} & 0 \\
\sin t\sqrt{1+\beta^2} & \cos t\sqrt{1+\beta^2} & 0 \\
0 & 0 & 1
\end{array}\right)\times$$
$$\left(\begin{array}{ccc}
1 & 0 & -\beta \\
0 & \sqrt{1+\beta^2} & 0 \\
\beta & 0 & 1
\end{array}\right)= \left(\begin{array}{ccc}
1-n & -m & n\beta \\
m & 1-n(1+\beta^2) & -m\beta \\
n\beta & m\beta & 1-n\beta^2
\end{array}\right).$$
Now, using (\ref{sol}) and (\ref{AA}) for $\phi_0=-\beta t$, we get
$$\gamma(t)\mid_{\phi_0=0}=
\left(\begin{array}{ccc}
1-n & -m & n\beta \\
m & 1-n(1+\beta^2) & -m\beta \\
n\beta & m\beta & 1-n\beta^2
\end{array}\right)\cdot\left(\begin{array}{ccc}
1 & 0 & 0 \\
0 & \cos \beta t & \sin \beta t\\
0 & -\sin \beta t & \cos \beta t
\end{array}\right)=$$
$$\left(\begin{array}{ccc}
1-n & -m\cos{\beta t}-n\beta\sin{\beta t} & \beta n\cos{\beta t}-m\sin{\beta t} \\
m & (1-n(1+\beta^2))\cos{\beta t}+m\beta\sin{\beta t} &
(1-n(1+\beta^2))\sin{\beta t}-m\beta\cos{\beta t} \\
\beta n & m\beta\cos{\beta t}-(1-\beta^2n)\sin{\beta t} &
(1-\beta^2n)\cos{\beta t}+m\beta\sin{\beta t}
\end{array}\right).$$
By (\ref{AA}), matrices $B=\exp(\phi_0)$ and $\exp{(-t\beta c)}$ commute. It follows from here and from remark \ref{Ad} that
\begin{equation}
\label{gb}
\gamma(t)=
B\cdot\gamma(t)\mid_{\phi_0=0}\cdot B^{-1}.
\end{equation}
Substitution of formula (\ref{AA}) into the last equality finishes the proof.
\end{proof}

\section{Shortest arcs on the Lie group $(SO(3),d)$}
\label{geom}

The group $SO(3)$ is realized as the group of all preserving orientation isometries
$$v\rightarrow gv; \quad g \in SO(3), v\in S^2$$
of unit sphere $S^2\subset \mathbb{R}^3,$ whose elements $v$ are regarded as vector-columns. It is not difficult to check that Lie subgroup
$$SO(2):= \{\exp s c, s\in \mathbb{R}\}\subset SO(3)$$
is the stabilizer of vector $v_0=(1,0,0)^T=e_1\in S^2$ with respect to this action.
Moreover the group $SO(2)$ acts (simply) transitively by rotations on unit circle
$S^1:=S^2\cap e_1^{\perp}\subset S^2.$

Therefore $S^2$ is naturally identified with quotient homogeneous space
$SO(3)/SO(2)$ and the group $SO(3)$ itself is diffeomorphic to the space $S^2_1$ of all unit tangent vectors to $S^2.$ Namely, every element $g\in SO(3)$ corresponds to $ge'_2,$ where $e'_2$ is usual parallel translation of vector $e_2$ to point
$e_1$. Moreover, in consequence of introduction,

1) Any segment of a smooth path $c=c(t)$ in $(SO(3),d),$ tangent to distribution
$D,$ has the same length as its image relative to canonical projection
\begin{equation}
\label{p}
p:g\in SO(3)\rightarrow ge_1\in S^2;
\end{equation}

2) under indicated identification of $SO(3)$ with $S^2_1,$ any path
$c=c(t), 0\leq t \leq t_1,$ tangent to distribution $D,$ is realized as  parallel vector field in $S^2$ along $p(c(t)), 0\leq t \leq t_1,$ with initial unit tangent vector $c(0)\in S^2_1;$

3) By the Gauss-Bonnet theorem \cite{Pog}, under parallel translation in
$S^2$ of non-zero tangent vector along a contour, bounding a region in
$S^2$ with area $S < 2\pi,$ the vector turns in the direction of bypass by
the angle $S.$

Let us use statements 1) --- 3) to find shortest arcs in
$(SO(3),d)$. In consequence of proposition \ref{maint}, remark \ref{Ad}, and  left invariance of the metric $d$, it is sufficient to investigate segments of geodesics of the form
\begin{equation}
\label{ge}
\gamma(t)=\exp(t(a+\beta c))\exp(-t\beta c),\quad 0\leq t\leq t_1,
\end{equation}
and their projections
\begin{equation}
\label{pr}
x(t):= p(\gamma(t))=\gamma(t)\cdot e_1=\gamma(t)\cdot (1,0,0)^T=(1-n,m,\beta n)^T,\quad 0\leq t\leq t_1,
\end{equation}
to the sphere $S^2,$ where $m$, $n$ are defined by formulae (\ref{mn}) (we
used formula (\ref{geod}) for  $\phi_0=0$).

Since the second factor in (\ref{ge}) lies in $SO(2),$ then orbits (\ref{pr}) coincide with segments of orbits of 1-parameter subgroup $y(t)=\exp(t(a +\beta c)),$ $t\in \mathbb{R}.$

It is not difficult to calculate that $\pm (1/\sqrt{1+\beta^2})(\beta,0,1)^T\in S^2$ are unit eigenvectors of matrix $a+\beta c$ with respect to zero eigenvalue. Consequenly, 1-parameter subgroup $y(t),$ $t\in \mathbb{R},$ preserves these vectors. Scalar products of these vectors with $e_1$ are equal to $\pm (\beta/\sqrt{1+\beta^2}).$ Then spherical distance from the point $e_1$ to the axis of these vectors
is equal to
\begin{equation}
\label{r}
r=\mbox{arccos}(|\beta|/\sqrt{1+\beta^2}) \leq \pi/2.
\end{equation}
Therefore the orbit $\{\gamma(t)e_1=y(t)e_1\}$ is spherical circle of radius
$r < \pi/2$ with unique center $(1/\sqrt{1+\beta^2})(\beta,0,1)^T,$ if
$\beta\neq 0.$ It is not difficult to see that if $\beta > 0,$ then
in consequence of theorem \ref{matrexp}, curve (\ref{pr}) for
$t_1=2\pi/\sqrt{1+\beta^2}$ goes around this circle, bounding lesser region
$\Psi$ of $S^2$ with this center inside it, one times, \textit{leaving the region  $\Psi$ from the left}.

Let us formulate the Gauss-Bonnet theorem \cite{Pog}. Let $M$ be
two-dimensional oriented manifold with Riemannian metric $ds^2,$ $\Phi$ is a region in $M,$ homeomorphic to disc and bounded by closed piece-wise regular curve
$\gamma$ with regular links $\gamma_1,\dots, \gamma_n,$ forming angles
$\alpha_1, \dots, \alpha_n$ from the side of region $\Phi.$ Direction on the curve
$\gamma$ is given so that the region $\Phi$ is situated from the right under bypass of the curve in this direction. Then

\begin{theorem}
\label{gauss}
\begin{equation}
\label{ga}
\sum_{k=1}^n \int_{\gamma_k}\kappa ds + \sum_{k=1}^n (\pi-\alpha_k) = 2\pi -\int\int_{\Phi}Kd\sigma,
\end{equation}
where $\kappa$ is geodesic curvature at points of links of the curve, $K$ is
Gaussian (sectional) curvature of the surface $(M,ds^2),$ and integration in the
right part of equality is taken by area element of the region $\Phi.$

In particular, if $\gamma$ is a regular curve, then
\begin{equation}
\label{gau}
\int_{\gamma}\kappa ds = 2\pi -\int\int_{\Phi}Kd\sigma.
\end{equation}
\end{theorem}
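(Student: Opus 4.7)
The plan is to use the Cartan moving-frame formulation. First, I would choose an oriented orthonormal frame $(e_1,e_2)$ on a neighborhood of $\overline{\Phi}$; such a frame exists globally because $\Phi$ is homeomorphic to a disk and so $T\overline{\Phi}$ is trivializable (if one worries about the boundary, one extends slightly). Writing $\nabla_X e_1 = \omega(X)\,e_2$ defines the Levi-Civita connection $1$-form $\omega$, and the Cartan structural equation on a surface gives
\begin{equation*}
d\omega = -K\,d\sigma,
\end{equation*}
where $d\sigma$ is the oriented area form of $(M,ds^2)$.

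Next, along each smooth link $\gamma_k$ parametrized by arclength $s$, I would introduce a continuous angle function $\theta_k(s)$ so that the unit tangent equals $T(s)=\cos\theta_k(s)\,e_1+\sin\theta_k(s)\,e_2$, with the boundary orientation prescribed in the hypothesis. A direct calculation using $\nabla_T T = \kappa\,N$ (where $N$ is the inward unit normal) yields
\begin{equation*}
\kappa\,ds \;=\; d\theta_k + \omega
\end{equation*}
along $\gamma_k$. Summing over $k$ and applying Stokes' theorem to $\omega$ on $\Phi$ produces
\begin{equation*}
\sum_{k=1}^n \int_{\gamma_k}\kappa\,ds \;=\; \sum_{k=1}^n \Delta_k\theta \;+\; \int_{\partial\Phi}\omega \;=\; \sum_{k=1}^n \Delta_k\theta \;-\; \int\int_{\Phi} K\,d\sigma,
\end{equation*}
where $\Delta_k\theta$ denotes the net increment of $\theta_k$ along $\gamma_k$.

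To close the argument, I would invoke Hopf's Umlaufsatz for piecewise regular simple closed curves bounding a topological disk: the total turning of the tangent, obtained by adding the smooth increments to the exterior jumps $\pi-\alpha_k$ at vertices, equals $2\pi$, that is,
\begin{equation*}
\sum_{k=1}^n \Delta_k\theta \;+\; \sum_{k=1}^n (\pi-\alpha_k) \;=\; 2\pi.
\end{equation*}
Substituting this into the preceding identity yields (\ref{ga}); formula (\ref{gau}) is the special case with no corners.

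The principal obstacle is the Umlaufsatz itself, which is a topological statement separate from the differential-geometric machinery: one proves it by deforming the unit-tangent direction field along the curve to a model (a small circle on the disk) and tracking the winding number under the homotopy. A secondary, mostly cosmetic, issue is the global existence of a single orthonormal frame on $\overline{\Phi}$; if a single smooth frame is awkward, one triangulates $\Phi$, runs the argument on each triangle with a locally defined frame, and observes that the boundary contributions along shared interior edges cancel.
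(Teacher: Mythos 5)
The paper offers no proof of this statement: Theorem \ref{gauss} is the classical local Gauss--Bonnet theorem, quoted from Pogorelov's textbook \cite{Pog} and then used as a black box, so there is no argument in the paper to compare yours against. Your outline is the standard modern proof (Cartan connection form $\omega$ with $d\omega=-K\,d\sigma$, the identity $\kappa\,ds=d\theta_k+\omega$ along each link, Stokes' theorem, and Hopf's Umlaufsatz), and it is essentially correct. Two caveats. First, the hypothesis here places $\Phi$ to the \emph{right} of the direction of traversal, whereas your computation (inward normal obtained by rotating $T$ by $+\pi/2$, total turning $+2\pi$) assumes the region lies on the left; under the paper's convention the total turning is $-2\pi$, the inward normal is the tangent rotated by $-\pi/2$, and $\partial\Phi$ is negatively oriented for Stokes' theorem --- these sign changes cancel and reproduce (\ref{ga}) exactly, but you should carry them through consistently rather than quoting the left-handed identities verbatim. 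Second, you correctly isolate the Umlaufsatz as the one genuinely nontrivial ingredient; on a curved surface it is usually obtained by covering $\overline{\Phi}$ by a single coordinate chart and deforming the metric linearly to the Euclidean one, the integer $\frac{1}{2\pi}\left(\sum_{k}\Delta_k\theta+\sum_{k}(\pi-\alpha_k)\right)$ being constant along the deformation, so that the flat-plane case of Hopf applies. With these adjustments the proposal is a complete and correct proof of the cited classical theorem.
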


\begin{proposition}
\label{kap}
Geodesic curvature of curve (\ref{pr}) for $\beta > 0$ is equal to $-|\beta|.$
\end{proposition}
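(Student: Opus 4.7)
The plan is to apply the Gauss--Bonnet theorem (in the closed-regular form (\ref{gau})) to the projected curve over one period and solve for the (constant) geodesic curvature.

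First I would note that, by the discussion preceding the proposition, the projected curve $x(t)=p(\gamma(t))$ coincides with the orbit of the $1$-parameter subgroup $y(t)=\exp(t(a+\beta c))$ acting on $e_1$, which is a Euclidean circle on $S^2$ of spherical radius
$$r=\arccos\!\bigl(|\beta|/\sqrt{1+\beta^2}\bigr),\qquad \sin r = 1/\sqrt{1+\beta^2},\quad \cos r = |\beta|/\sqrt{1+\beta^2}.$$
Since $x(t)$ is a spherical circle of constant radius, its geodesic curvature $\kappa$ is constant along the curve. Because $t$ is the arclength parameter of $\gamma$ (Theorem \ref{general}) and the projection $p$ preserves arclength on curves tangent to $D$ (statement 1) before the proposition), $t$ is also the arclength parameter of $x(t)$; thus the full period $t_1 = 2\pi/\sqrt{1+\beta^2}$ given in the text is precisely the Euclidean length $L=2\pi \sin r$ of the circle.

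Next I would invoke the explicit choice of side: for $\beta>0$ the text records that the closed curve bounds the smaller spherical cap $\Psi$ (containing the center $(1/\sqrt{1+\beta^2})(\beta,0,1)^T$) and is traversed so that $\Psi$ lies on the left. In the orientation convention of Theorem \ref{gauss}, the region to be inserted into (\ref{gau}) is the one on the right, i.e.\ $\Phi = S^2\setminus \Psi$, whose area is
$$\operatorname{area}(\Phi) = 4\pi - 2\pi(1-\cos r) = 2\pi(1+\cos r).$$
Since $K\equiv 1$ on $S^2$ and $\kappa$ is constant, (\ref{gau}) gives
$$\kappa\cdot 2\pi \sin r \;=\; 2\pi - 2\pi(1+\cos r) \;=\; -2\pi \cos r,$$
hence $\kappa = -\cot r = -|\beta|$, as claimed.

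The one step that needs genuine care (rather than routine calculation) is the orientation bookkeeping: deciding which of the two caps of $S^2$ bounded by the circle plays the role of $\Phi$ in Gauss--Bonnet. This is where the sign $-|\beta|$ is determined, and everything else is a short computation using $\sin r$, $\cos r$ and $\operatorname{area}(\Psi)=2\pi(1-\cos r)$. The paper has already pinned down this orientation (``leaving the region $\Psi$ from the left'' for $\beta>0$) via the explicit matrix form of $\gamma(t)$ from Theorem \ref{matrexp}, so I would simply quote that statement and proceed with the computation above.
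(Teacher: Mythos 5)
Your proof is correct and follows essentially the same route as the paper: apply the closed-curve form of Gauss--Bonnet to one full period of the projected circle, take $\Phi=S^2\setminus\overline{\Psi}$ on the strength of the orientation statement (``leaving $\Psi$ from the left'' for $\beta>0$), and solve $\kappa L = 2\pi-\sigma(\Phi)$ for $\kappa$. The only cosmetic difference is that you phrase the computation in terms of $\sin r$ and $\cos r$ (getting $\kappa=-\cot r$) while the paper substitutes $\cos r=|\beta|/\sqrt{1+\beta^2}$ throughout; the numbers are identical.
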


\begin{proof}
In consequence of what has been said, applying equality (\ref{gau}) to circle (\ref{pr}) for $\beta > 0$ and $t_1=2\pi/\sqrt{1+\beta^2},$ one needs to take region $\Phi=S^2\smallsetminus \overline{\Psi}$ in $S^2$ and $K=1.$
Then the left part of (\ref{gau}) is equal to $\kappa t_1.$ For the right part,
we need area $\sigma(\Phi).$

It is known that in $S^2$
\begin{equation}
\label{l1}
l(r,\alpha)= \alpha \sin r,
\end{equation}
\begin{equation}
\label{S}
S(r,\alpha)=\int_0^r \alpha \sin s ds = \alpha \ch s|_0^r= \alpha (1-\cos r),
\end{equation}
where $l(r,\alpha)$ is the length of arc of circle with radius $r$ and central
angle $\alpha \leq 2\pi,$ and $S(r,\alpha)$ is area of corresponding sector. Then in consequense of (\ref{r}),
$$\sigma(\Psi)=2\pi\left(1-\frac{|\beta|}{\sqrt{1+\beta^2}}\right),$$
$$\sigma(\Phi)=4\pi-\sigma(\Psi)= 2\pi\left(1+\frac{|\beta|}{\sqrt{1+\beta^2}}\right),$$
$$\frac{2\pi\kappa}{\sqrt{1+\beta^2}}= 2\pi - \sigma(\Phi)= -2\pi\frac{|\beta|}{\sqrt{1+\beta^2}},
\quad \kappa=-|\beta|.$$
\end{proof}

\begin{proposition}
\label{area}
Let us assume that projection (\ref{pr}) of geodesic segment (\ref{ge}), where
$\beta\neq 0,$ has no self-intersection, i.e.
$0\leq t_1< 2\pi/\sqrt{1+\beta^2}$, $S(t_1)=S(t_1,\beta)$ is area of lesser
curvilinear digon $P$ in $S^2$, bounded by segment (\ref{pr}) and shortest
segment $[x(0)x(t_1)]$ of a length $r=r(t_1)$ in $S^2,$
$\psi=\psi(t_1,\beta)$ is interior angle of the digon $P$. Then
\begin{equation}
\label{exist}
r = \Arccos((1-n)(t_1)), \quad r'(t_1)=\cos\psi = \frac{m}{\sqrt{n(2-n)}},\quad S(t_1)=2\psi- |\beta|t_1.
\end{equation}
Moreover $S'(t_1)>0,$ if $t_1>0;$ $0< \psi \leq \pi/2,$ if $0 < t_1 \leq \pi/\sqrt{1+\beta^2}$, and $\pi/2 < \psi < \pi,$ if $\pi/\sqrt{1+\beta^2} < t_1 \leq 2\pi/\sqrt{1+\beta^2}.$
\end{proposition}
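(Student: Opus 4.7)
\emph{Proof plan.} First, since $x(0) = e_1$ and $x(t_1) = (1-n, m, \beta n)^\top$ are unit vectors in $\mathbb{R}^3$, the spherical distance between them equals $\arccos$ of their Euclidean inner product, giving $r = \Arccos(1 - n(t_1))$. Differentiating and using the identity $n'(t_1) = m(t_1)$ (immediate from (\ref{mn})) yields $r'(t_1) = m/\sqrt{n(2-n)}$. The identification $r'(t_1) = \cos\psi$ is the first-variation formula for arc length on the round $S^2$: for the unit-speed curve $x(t)$ and the fixed point $p = x(0)$, $(d/dt_1)\, d(p, x(t_1))$ equals the cosine of the angle between $x'(t_1)$ and the outward-pointing tangent at $x(t_1)$ of the minimizing geodesic from $p$, and this angle coincides with the interior angle $\psi$ of the digon at $x(t_1)$ (the outgoing boundary directions $-x'(t_1)$ and the tangent from $x(t_1)$ back toward $x(0)$ enclose an angle of the same cosine).

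For the area formula $S(t_1) = 2\psi - |\beta|t_1$, apply Theorem~\ref{gauss} to the digon $P$; the orientation is the main subtlety (and the main obstacle). A direct midpoint computation shows that the great-circle chord $[x(0)x(t_1)]$ lies closer to the center $q = (1/\sqrt{1+\beta^2})(\beta,0,1)^\top$ of $\Psi$ than the corresponding midpoint of the sub-arc (\ref{pr}), so $P$ lies on the $\Psi$-side of the chord. Consequently, traversing $\partial P$ with $P$ on the right goes along the sub-arc in the direction of \emph{decreasing} $t$ (opposite to the direction in Proposition~\ref{kap}); since reversing the parametrization flips the sign of signed geodesic curvature, the sub-arc then has $\kappa = +|\beta|$. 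The reflection of $S^2$ across the perpendicular bisector of the chord (a geodesic in $S^2$) fixes $q$, swaps $x(0)\leftrightarrow x(t_1)$, and preserves each boundary arc of $P$; hence both interior angles of $P$ equal one and the same $\psi$. Noting that the sub-arc has length $t_1$ (the projection $p$ preserves length of horizontal curves) and that the chord is a geodesic, formula (\ref{ga}) gives $|\beta|t_1 + 2(\pi - \psi) = 2\pi - S(t_1)$, whence $S(t_1) = 2\psi - |\beta|t_1$.

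For the final claims, set $\tau = t_1\sqrt{1+\beta^2}$. Half-angle identities simplify $\cos\psi = m/\sqrt{n(2-n)}$ to the cleaner form $\tan\psi = (|\beta|/\sqrt{1+\beta^2})\tan(\tau/2)$, from which the three-range statement for $\psi$ follows immediately: the sign of $\tan(\tau/2)$ on $(0,\pi/2)$, $\{\pi/2\}$, $(\pi/2,\pi)$ determines whether $\psi$ is below, equal to, or above $\pi/2$, with the limits $\psi\to 0$ as $t_1\to 0^+$ and $\psi\to\pi$ as $t_1\to 2\pi/\sqrt{1+\beta^2}$. Differentiating the $\tan\psi$ identity yields
\[
\psi'(t_1) = \frac{|\beta|(1+\beta^2)}{2(\beta^2 + \cos^2(\tau/2))},
\]
which strictly exceeds $|\beta|/2$ whenever $\cos^2(\tau/2) < 1$, i.e., for $0 < t_1 < 2\pi/\sqrt{1+\beta^2}$; hence $S'(t_1) = 2\psi'(t_1) - |\beta| > 0$ on this interval.
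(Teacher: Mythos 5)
Your proof is correct, and for the three identities in (\ref{exist}) it follows essentially the paper's route: the spherical distance formula for $r$, the first-variation (``strong angle'') argument for $r'(t_1)=\cos\psi$, differentiation using $n'=m$, and the Gauss--Bonnet theorem for the area formula. Two points genuinely differ. First, you apply Theorem \ref{gauss} directly to the digon $P$, reversing the parametrization of the arc so that $P$ lies on the right and hence $\kappa=+|\beta|$; the paper instead applies it to the complement $\Phi=S^2\smallsetminus\overline{P}$, keeping the orientation of Proposition \ref{kap} with $\kappa=-|\beta|$ and interior angles $2\pi-\psi$. The two computations are equivalent, and your explicit discussion of which side of the arc $P$ lies on (which can be justified even more simply by geodesic convexity of the disc $\overline{\Psi}$ of radius $r<\pi/2$, so that the chord, and hence $P$, lies in $\overline{\Psi}$) together with the reflection argument forcing the two interior angles to coincide makes the orientation bookkeeping more transparent than in the paper. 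Second, for $S'(t_1)>0$ you derive the closed form $\tan\psi=(|\beta|/\sqrt{1+\beta^2})\tan(t_1\sqrt{1+\beta^2}/2)$ and differentiate it, obtaining
\begin{equation*}
\psi'(t_1)=\frac{|\beta|(1+\beta^2)}{2\left(\beta^2+\cos^2(t_1\sqrt{1+\beta^2}/2)\right)},
\end{equation*}
which at the same time settles the three-range statement for $\psi$ that the paper dismisses as ``evident''. The paper instead obtains $\psi'(t_1)=|\beta|/(2-n)$ (formula (\ref{dpsi})) by combining $S'=2\psi'-|\beta|$ with the first-variation-of-area identity $S'=(1-\cos r)\psi'$, which it does not justify; your expression agrees with (\ref{dpsi}) because $2-n=2\left(\beta^2+\cos^2(t_1\sqrt{1+\beta^2}/2)\right)/(1+\beta^2)$, so your derivation is self-contained where the paper's relies on an unproved geometric identity.
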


\begin{proof}
The first equality in  (\ref{exist}) is a corollary of (\ref{pr}) and known formula for distance in spherical geometry, the second one is a well-known statement of Riemannian geometry (on existence of strong angle), the third equality is result
of differentiation of first equality in (\ref{exist}). Inequalities for the angle are evident. In consequence of remark \ref{change} one can assume that $\beta > 0.$
Segment $[x(0)x(t_1)]$ has geodesic curvature $0.$ Then, with taking into account
$\Phi=S^2\smallsetminus \overline{P}$ and proposition \ref{kap},  equation (\ref{ga}) is written in the form
$$-|\beta|t_1 + (2\pi - (4\pi -2\psi))=2\pi - (4\pi - S(t_1)).$$
Consequently, $S(t_1)= 2\psi - |\beta|t_1.$  From here and (\ref{S}) follow
relations
$$S'(t_1)=2\psi'(t_1)-|\beta|=(1-\cos r)\psi'(t_1),$$
\begin{equation}
\label{dpsi}
\psi'(t_1)=\frac{|\beta|}{1 + \cos r}=\frac{|\beta|}{2-n},
\end{equation}
\begin{equation}
\label{der}
S'(t_1)=|\beta|\left(\frac{2}{2-n}-1\right)(t_1)>0, \quad 0 < t_1 < \frac{2\pi}{\sqrt{1+\beta^2}}.
\end{equation}
\end{proof}

\begin{lemma}
\label{zer}
If $\beta=0$ and $t_1=\pi,$ then (\ref{ge}) is noncontinuable shortest arc.
\end{lemma}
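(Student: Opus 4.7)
The plan is as follows. When $\beta=0$ the second factor in (\ref{ge}) is the identity, so $\gamma(t)=\exp(ta)$ is a one-parameter subgroup tangent to $a\in D(e)$. Since $\langle a,a\rangle=1$, equation (\ref{dxsp2}) guarantees that this curve is parametrized by arclength, so the segment $\gamma|_{[0,\pi]}$ has sub-Riemannian length exactly $\pi$. From (\ref{mn}) with $\beta=0$ we have $m=\sin t$, $n=1-\cos t$, and (\ref{pr}) gives the projection
\begin{equation*}
x(t)=p(\gamma(t))=(\cos t,\sin t,0)^{T},
\end{equation*}
which is the unit-speed parametrization of a great circle in $S^{2}$. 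In particular $p(\gamma(\pi))=-e_{1}$, the antipode of $p(e)=e_{1}$.

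Next I would invoke the submetry property of $p:(SO(3),d)\to S^{2}$ recorded in (\ref{subm}) and in statement 1) of Section~\ref{geom}. A submetry does not decrease distances, and the spherical distance between antipodes on $S^{2}$ is $\pi$. Hence
\begin{equation*}
d(e,\gamma(\pi))\;\ge\;d_{S^{2}}(e_{1},-e_{1})\;=\;\pi,
\end{equation*}
while the horizontal curve $\gamma|_{[0,\pi]}$ already realizes length $\pi$ between these two points. This proves that $\gamma|_{[0,\pi]}$ is a shortest arc and that $d(e,\gamma(\pi))=\pi$.

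It remains to show noncontinuability, i.e.\ that no strictly longer initial segment $\gamma|_{[0,t_{1}]}$ with $t_{1}>\pi$ is still shortest. Here I would exploit the fact that the one-parameter subgroup $s\mapsto \exp(sa)$ is $2\pi$-periodic on $SO(3)$: taking $\widetilde\gamma(s):=\exp(-sa)$, which is also horizontal and parametrized by arclength, one has
\begin{equation*}
\widetilde\gamma(2\pi-t_{1})=\exp(-(2\pi-t_{1})a)=\exp(t_{1}a)=\gamma(t_{1}).
\end{equation*}
Thus $d(e,\gamma(t_{1}))\le 2\pi-t_{1}<t_{1}$ whenever $t_{1}>\pi$, so $\gamma|_{[0,t_{1}]}$ cannot be a shortest arc. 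Combined with the previous step this yields the lemma.

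The only mildly delicate point is the lower bound $d(e,\gamma(\pi))\ge\pi$; everything else is a direct substitution or a use of the periodicity of $\exp(\mathbb R a)$. That lower bound is the place where the submetry structure does the real work, but it has already been recorded in the preparatory discussion of Section~\ref{geom}, so the proof reduces to assembling these observations.
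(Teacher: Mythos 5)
Your proof is correct and follows essentially the same route as the paper: the submetry $p$ forces $d(e,\gamma(\pi))\ge d_{S^2}(e_1,-e_1)=\pi$ while the horizontal segment realizes length $\pi$, and the $2\pi$-periodicity of $\exp(ta)$ (equivalently $\gamma(\pi)=\gamma(-\pi)$) kills minimality past $t=\pi$. Only a wording slip: a submetry does not \emph{increase} distances (it is $1$-Lipschitz), which is exactly the inequality you then display, so nothing substantive is affected.
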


\begin{proof}
In this case $\gamma(t)=\exp(ta)$. Then $\gamma(2\pi)=e$ and, consequently,
$\gamma(\pi)=\gamma(-\pi).$ Therefore geodesic segment $\gamma(t)$,
$0\leq t\leq t_2$, is not shortest arc for $t_2> t_1=\pi$. On the other hand, canonical projection $p: (SO(3),d)\rightarrow S^2$ (see (\ref{subm}) и (\ref{p}))
is a submetry, moreover
$$\gamma(\pi)=-(e_{11}+e_{22})+e_{33},\quad p(\gamma(\pi))=\gamma(\pi)e_1=-e_1,$$
i.e. path $p(\gamma(t)),$ $0\leq t \leq \pi,$ is shortest connection in $S^2$ of diametrally opposite points $e_1$ and $-e_1$. Then (\ref{ge}) is noncontinuable shortest arc.
\end{proof}

\begin{proposition}
\label{zer1}
1)If $\beta\neq 0$ then geodesic segment (\ref{ge}) is noncontinuable shortest arc
when its projection (\ref{pr}) is a) one time passing circle $C$ bounding disc
with area $S(t_1)\leq \pi$ or b) curve without self-intersections bounding together with the shortest arc $[x(0)x(t_1)]$ in $S^2$ digon $P$ in $S^2$ with area
$S(t_1) =\pi$.

2) For every $\beta\neq 0$ there is unique $t_1 > 0$ such that one of conditions
a) or b) is satisfied; a) is satisfied only if $|\beta|\geq 1/\sqrt{3}.$
\end{proposition}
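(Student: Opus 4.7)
\textit{Part 2 (existence and uniqueness of $t_1$).} By Proposition \ref{area}, $S(t_1)$ is strictly increasing on $(0,2\pi/\sqrt{1+\beta^2})$; using the formula $S(t_1)=2\psi-|\beta|t_1$ together with the limits $\psi(t_1)\to\pi$ and $t_1\to 2\pi/\sqrt{1+\beta^2}$, one gets $S(t_1)\to S_\Psi:=2\pi(1-|\beta|/\sqrt{1+\beta^2})$ at the closing time (consistent with the digon degenerating to the full disc $\Psi$). Thus $S(t_1)=\pi$ has a unique solution in the open interval precisely when $S_\Psi\geq\pi$, i.e.\ $|\beta|\leq 1/\sqrt{3}$ (case (b)); whereas when $S_\Psi\leq\pi$, i.e.\ $|\beta|\geq 1/\sqrt{3}$, the projection first closes into a full circle of disc area $S_\Psi\leq\pi$ at $t_1=2\pi/\sqrt{1+\beta^2}$ (case (a)). The two descriptions coincide only at $|\beta|=1/\sqrt{3}$, giving both uniqueness of $t_1$ and the stated bound on $|\beta|$ in case (a).

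\textit{Part 1, shortness.} The plan is to transfer length minimization on $(SO(3),d)$ to a constrained isoperimetric problem on $S^2$. For any horizontal $\tilde\eta$ from $e$ to $\gamma(t_1)$, item 1) of Section \ref{geom} gives $\operatorname{length}_d(\tilde\eta)=\operatorname{length}_{S^2}(\eta)$ where $\eta=p\circ\tilde\eta$, and items 2)--3) combined with the horizontal lift property translate the endpoint condition $\tilde\eta(t_1)=\gamma(t_1)$ into the holonomy constraint that the signed area enclosed between $\eta$ and $x$ lie in $2\pi\mathbb Z$; equivalently, $A_\eta\equiv S(t_1)\pmod{2\pi}$, where $A_\eta$ is the signed area between $\eta$ and the chord $[e_1,q]$ (case (b)), or the signed area enclosed by the loop $\eta$ itself in case (a), where $q=e_1$. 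Since $S(t_1)\leq\pi$ in both situations, the residue class contains $\pm S(t_1)$ as its representatives of smallest absolute value, and the spherical isoperimetric inequality (Dido's minimizers on $S^2$ are arcs of constant geodesic curvature, unique up to reflection across the chord) identifies the length-minimizing $\eta$ with the arc $x$ itself (curvature $-|\beta|$ by Proposition \ref{kap}, radius $r$ by \eqref{r}) or with its chord-reflection $\tilde x$, both of length $t_1$. Hence $\gamma|_{[0,t_1]}$ is shortest, and by strict monotonicity of $S$ the same argument applied on $[0,t]$ for any $t\leq t_1$ gives minimality up to $t_1$.

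\textit{Noncontinuability.} To rule out any extension past $t_1$, it suffices to exhibit a second length-$t_1$ geodesic terminating at $\gamma(t_1)$, after which the standard cut-locus/first-variation argument forbids further minimality. In case (b), the reflected arc $\tilde x$ from the previous paragraph is another circular arc of equal length from $e_1$ to $q$; the loop $x\cup\tilde x$ bounds a symmetric lens of area $2\pi$ (two digons of area $\pi$ glued along the chord), so its horizontal lift from $e$ again reaches $\gamma(t_1)$, and by Remark \ref{Ad} and formula \eqref{sol} with $-\beta$ and a suitable $\phi_0$ it is the projection of a bona fide geodesic distinct from $\gamma$. In case (a), plugging $t=2\pi/\sqrt{1+\beta^2}$ into \eqref{sol} makes the first factor $\exp\bigl(t(\cos\phi_0 a+\sin\phi_0 b+\beta c)\bigr)$ a rotation of angle $t\sqrt{1+\beta^2}=2\pi$ and hence equal to $e$, whence $\gamma(t_1)=\exp(-2\pi\beta c/\sqrt{1+\beta^2})$ independently of $\phi_0$, producing an entire $1$-parameter family of length-$t_1$ geodesics through the same endpoint. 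The main obstacle throughout is the isoperimetric identification in the shortness step: one has to ensure that no integer shift $S(t_1)+2\pi k$ with $k\neq 0$ of the area constraint produces a shorter curve, which reduces to monotonicity of the Dido length on the admissible range $|A|\leq 2\pi$ and succeeds precisely because of the bounds $S(t_1)\leq\pi$ (case (b)) and $|\beta|\geq 1/\sqrt{3}$ (case (a)) --- with equality exactly at the transition $|\beta|=1/\sqrt{3}$, matching the uniqueness statement of Part 2.
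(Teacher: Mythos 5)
Your Part 2 and your noncontinuability arguments are essentially the paper's own: the threshold $|\beta|\geq 1/\sqrt{3}$ comes from requiring $S(2\pi/\sqrt{1+\beta^2})=2\pi(1-|\beta|/\sqrt{1+\beta^2})\leq\pi$, uniqueness of $t_1$ comes from the strict monotonicity of $S$ in Proposition \ref{area}, and in both cases a) and b) one exhibits a second geodesic of the same length ending at $\gamma(t_1)$ (the whole $SO(2)$-family of Remark \ref{Ad} in case a), the chord-reflected arc in case b)), which kills any extension.

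The genuine gap is in the minimality step. You compete against \emph{all} horizontal paths from $e$ to $\gamma(t_1)$ and appeal to the solved Dido problem on $S^2$ with the enclosed area taken modulo $2\pi$. As you yourself flag, this is ``the main obstacle'', and you do not discharge it: you would need (i) the spherical isoperimetric inequality for arbitrary rectifiable competitors, including self-intersecting ones, for which the signed area governing the holonomy is a winding-number-weighted sum not confined to $[-4\pi,4\pi]$, so the claim that only the representatives $\pm S(t_1)$ of the residue class matter is not immediate (indeed $-S(t_1)$ lies in the class only when $S(t_1)=\pi$); (ii) the identification of $x$ itself as the Dido minimizer for the prescribed area --- for fixed endpoints and fixed enclosed area there are in general two constant-curvature critical arcs, a minor and a major one, and only one of them minimizes length; and (iii) the asserted monotonicity of the Dido length in the area. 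None of (i)--(iii) is proved. The paper's proof of Proposition \ref{zer1} avoids all of this with a cheaper reductio: if $\gamma|_{[0,t_1]}$ were not shortest, there would exist a genuine shortest arc $\gamma_2$ of length $t_2<t_1$ with the same endpoints; being a shortest arc it is a geodesic, so its projection is again an explicit constant-curvature arc (Proposition \ref{kap}); a containment/convexity argument forces its enclosed area $S(t_2)$ to be strictly smaller than $S(t_1)\leq\pi$, whence by Gauss--Bonnet the holonomies along the two projections differ, contradicting $\gamma_2(t_2)=\gamma(t_1)$. Either supply (i)--(iii), or restrict your competitors to geodesics as the paper does.
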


\begin{proof}
1) a) It is clear that $\gamma(t_1)\in SO(2).$ Then in consequence of
remark  \ref{Ad}, segment of geodesic
(\ref{sol}) for the same $\beta$ and any $\phi_0$ under $t\in [0,t_1]$ joins the same points as (\ref{ge}). Consequently every continuation of the segment (\ref{ge}) is not a shortest arc.

Let us suppose that there exists a shortest arc $\gamma_2(t),$
$0\leq t\leq t_2 < t_1,$ in $(SO(3),d)$ which joins points $\gamma(0)=e$ and
$\gamma(t_1).$ Then projection $x_2(t)=p(\gamma_2(t)),$ $0\leq t\leq t_2,$ is one time passing circle $C_2$ in $S^2$ with length $t_2 < t_1$ and therefore bounds
a disc with area $S(t_2)< S(t_1)\leq \pi.$ Consequently on the ground of the Gauss-Bonnet theorem, results of parallel translations of nonzero vectors along $C$ and $C_2$ in $S^2$ are different. Then $\gamma_2(t_2)\neq \gamma(t_1)$
in view of geometric interpretation of geodesics in $(SO(3),d),$ given in introduction, a contradiction.

b) Let $P'$ be a digon, symmetric to the digon $P$ relative to segment
$[x(0)x(t_1)].$ Since $S(t_1)=\pi$ then by the Gauss-Bonnet theorem, results of parallel translations in $S^2$ of tangent vectors along closed paths, bounding $P$ and $P',$ are equal. Therefore on the ground of remarks \ref{change}, \ref{Ad} and geometric interpretation of geodesics in $(SO(3),d),$ given in introduction,
a curve in $S^2,$ symmetric to the projection (\ref{pr}) of segment (\ref{ge}) relative to segment $[x(0)x(t_1)],$ is presented in the form $p(\gamma_1(t)),$
$0\leq t \leq t_1,$ where $\gamma_1$ is a geodesic in $(SO(3),d)$ such that
$\gamma_1(0)=\gamma(0),$ $\gamma_1(t_1)=\gamma(t_1).$ Consequently every continuation of the segment (\ref{ge}) is not a shortest arc.

Let us suppose that there is a shortest arc $\gamma_2(t),$ $0\leq t\leq t_2 < t_1,$ in $(SO(3),d),$ joining points $\gamma(0)=e$ and $\gamma(t_1).$ Then
in consequence of remarks \ref{change} and \ref{Ad} we can assume that curves
(\ref{pr}) and $x_2(t)=p(\gamma_2(t)),$ $0\leq t\leq t_2,$ lie on the one side
of the shortest arc $[x(0)x(t_1)]$ and join ends of this shortest arc. Consequently  the digon $P$ and digon $P_2,$ bounded by the shortest arc $[x(0)x(t_1)]$ and the curve $x_2(t),$ $0\leq  t \leq t_2,$ are convex, moreover intersection of their boundaries is the shortest arc
$[x(0)x(t_1)],$  because $t_2 < t_1.$ Therefore in view of last inequality the curve
$x_2(t),$ $0< t < t_2,$ lies inside $P$ and $S(t_2)< S(t_1)=\pi,$ where $S(t_2)$ is area of the digon $P_2.$ Consequently on the ground of the Gauss-Bonnet theorem,
results of parallel translations of nonzero tangent vectors along boundaries of $P$ and $P_2$ in $S^2$ are different. Then $\gamma_2(t_2)\neq \gamma(t_1)$
in view of geometric interpretation of geodesics in $(SO(3),d),$ given in introduction, a contradiction.

2) On the ground of last equality in (\ref{exist}), the condition a) is fulfilled only if $t_1=2\pi/\sqrt{1+\beta^2},$ $\psi(t_1)=\pi$ and
$$\quad S\left(\frac{2\pi}{\sqrt{1 + \beta^2}}\right)= 2\pi - |\beta|\frac{2\pi}{\sqrt{1 + \beta^2}} \leq \pi
\Leftrightarrow |\beta|\geq \frac{1}{\sqrt{3}}.$$
If $0< |\beta|< \frac{1}{\sqrt{3}},$ then in consequence of proposition \ref{area} there exists unique $t_1> 0$ for which the condition b) is satisfied.
\end{proof}

Later for every number $\beta\neq 0$ we shall find a number $t_1=t_1(\beta),$ satisfying conditions of proposition \ref{zer1}.

I) If $|\beta|\geq \frac{1}{\sqrt{3}},$ then $t_1=2\pi/\sqrt{1 + \beta^2}.$

II) If $0 < |\beta| < \frac{1}{\sqrt{3}},$ then
$$S(2\pi/\sqrt{1 + \beta^2}) < 2\pi  \Rightarrow S(\pi/\sqrt{1 + \beta^2}) < \pi,$$
\begin{equation}
\label{to}
S(t_1)=\pi \Rightarrow \pi/\sqrt{1+\beta^2} < t_1 < 2\pi/\sqrt{1+\beta^2}.
\end{equation}
Therefore in consequence of proposition \ref{area}, $\pi/2 < \psi(t_1) < \pi$ and
\begin{equation}
\label{equal}
S(t_1)=\pi \Leftrightarrow \quad 2\psi - |\beta|t_1 = \pi \Leftrightarrow \frac{|\beta|t_1}{2} = \psi - \pi/2.
\end{equation}
In consequence of (\ref{exist}) and (\ref{mn}),
$$\cos\psi= \frac{m}{\sqrt{n(2-n)}}=\frac{\sqrt{1+\beta^2}\sin(t_1\sqrt{1+\beta^2})}
{\sqrt{(1-\cos(t_1\sqrt{1+\beta^2}))(1+\cos(t_1\sqrt{1+\beta^2})+2\beta^2)}}=$$
$$\frac{\sqrt{1+\beta^2}\cos(t_1\sqrt{1+\beta^2}/2)}
{\sqrt{\cos^2(t_1\sqrt{1+\beta^2}/2)+\beta^2}}.$$
We get from here, (\ref{equal}), inequalities for $t_1$ and $\psi$ that
$$\sin\psi = \sqrt{1-\cos^2\psi}= \frac{|\beta|\sin(t_1\sqrt{1+\beta^2}/2)}
{\sqrt{\cos^2(t_1\sqrt{1+\beta^2}/2)+\beta^2}},$$
\begin{equation}
\label{sin}
\sin\left(\frac{|\beta|t_1}{2}\right)=\sin\left(\psi-\frac{\pi}{2}\right)=
-\cos\psi=\frac{-\sqrt{1+\beta^2}\cos(t_1\sqrt{1+\beta^2}/2)}
{\sqrt{\cos^2(t_1\sqrt{1+\beta^2}/2)+\beta^2}},
\end{equation}
\begin{equation}
\label{cos}
\cos\left(\frac{|\beta|t_1}{2}\right)=\cos\left(\psi-\frac{\pi}{2}\right)=
\sin\psi=\frac{|\beta|\sin(t_1\sqrt{1+\beta^2}/2)}
{\sqrt{\cos^2(t_1\sqrt{1+\beta^2}/2)+\beta^2}},
\end{equation}
\begin{equation}
\label{bt}
0 < |\beta|t_1 < \pi.
\end{equation}

\begin{theorem}
\label{monot}
Conditions a),b) of proposition \ref{zer1} define a continuous function
$t_1=t_1(|\beta|),$ increasing under $0\leq |\beta| \leq 1/\sqrt{3}$ and decreasing under $1/\sqrt{3} \leq |\beta| < + \infty$.
\end{theorem}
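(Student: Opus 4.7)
The plan is to handle the two regimes of Proposition~\ref{zer1} separately---Case~I with $|\beta|\ge 1/\sqrt{3}$ (condition (a)) and Case~II with $0\le|\beta|<1/\sqrt{3}$ (condition (b), combined with Lemma~\ref{zer} at $|\beta|=0$)---and then to verify continuity at the common boundary $|\beta|=1/\sqrt{3}$. Case~I is immediate: condition (a) gives the explicit formula $t_1(|\beta|) = 2\pi/\sqrt{1+\beta^2}$, which is smooth and strictly decreasing on $[1/\sqrt{3},+\infty)$, with boundary value $t_1(1/\sqrt{3})=\pi\sqrt{3}$.

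For Case~II, $t_1$ is determined implicitly by $S(t_1,|\beta|)=\pi$ subject to $\pi/\sqrt{1+\beta^2}<t_1<2\pi/\sqrt{1+\beta^2}$. Since $\partial S/\partial t_1>0$ on this domain by (\ref{der}), the implicit function theorem yields a smooth function $|\beta|\mapsto t_1(|\beta|)$. To analyse its monotonicity, I would pass to the parameters
\[
\theta := |\beta|t_1/2\in(0,\pi/2),\qquad s := t_1\sqrt{1+\beta^2}/2 \in(\pi/2,\pi),
\]
in which the pair (\ref{sin})--(\ref{cos}) collapses to the single transcendental equation $\theta\tan\theta=-s\cot s$, with inverse formulas $t_1=2\sqrt{s^2-\theta^2}$ and $|\beta|=\theta/\sqrt{s^2-\theta^2}$. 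Both $\theta\tan\theta$ and $-s\cot s$ are smooth strictly increasing bijections of their respective intervals onto $(0,+\infty)$, so this equation defines a smooth strictly increasing bijection $\theta\leftrightarrow s$.

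Implicit differentiation of $\theta\tan\theta+s\cot s=0$ gives
\[
s'(\theta) = \frac{\sin^2 s\,(2\theta+\sin 2\theta)}{\cos^2\theta\,(2s-\sin 2s)},
\]
while a direct computation yields
\[
\frac{dt_1}{d\theta}=\frac{2(ss'(\theta)-\theta)}{\sqrt{s^2-\theta^2}},\qquad
\frac{d|\beta|}{d\theta}=\frac{s(s-\theta s'(\theta))}{(s^2-\theta^2)^{3/2}}.
\]
Since $dt_1/d|\beta|=(dt_1/d\theta)/(d|\beta|/d\theta)$, the desired strict increase of $t_1$ in $|\beta|$ is equivalent to the simultaneous positivity of both derivatives, and thus to the two-sided estimate $\theta/s<s'(\theta)<s/\theta$. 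Substituting the formula for $s'(\theta)$ and clearing denominators reduces these to trigonometric inequalities on $(0,\pi/2)\times(\pi/2,\pi)$ which I would verify using the defining identity in the form $\theta\sin\theta\sin s+s\cos\theta\cos s=0$ together with the signs $\cos s<0$ and $\sin s>0$ on the range. This pair of inequalities is the main technical obstacle; the rest is bookkeeping.

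Finally, continuity at the endpoints of Case~II follows by inspection of the parametrisation: as $\theta\to 0^+$ the implicit equation forces $s\to\pi/2$, hence $|\beta|\to 0$ and $t_1\to\pi$, matching Lemma~\ref{zer}; at $\theta=\pi/2$ one finds $s=\pi$, hence $|\beta|=1/\sqrt{3}$ and $t_1=\pi\sqrt{3}$, matching the Case~I value $2\pi/\sqrt{4/3}=\pi\sqrt{3}$. Combining all the pieces gives the claimed continuous function $t_1=t_1(|\beta|)$ on $[0,+\infty)$, strictly increasing on $[0,1/\sqrt{3}]$ and strictly decreasing on $[1/\sqrt{3},+\infty)$.
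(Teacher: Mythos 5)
Your strategy is sound and genuinely different from the paper's, but as written the argument has a gap at its single load-bearing point: the two-sided estimate $\theta/s<s'(\theta)<s/\theta$ is announced as ``the main technical obstacle'' and then deferred (``which I would verify\dots''). Everything before it --- the reparametrization, the implicit differentiation, the formulas for $dt_1/d\theta$ and $d|\beta|/d\theta$ --- is routine, and everything after it rests on that estimate, so until it is actually carried out the monotonicity on $[0,1/\sqrt{3}]$ is not established. A smaller imprecision: positivity of $dt_1/d|\beta|=(dt_1/d\theta)/(d|\beta|/d\theta)$ requires only that $ss'-\theta$ and $s-\theta s'$ have the \emph{same} sign, not that both be positive; you should say explicitly why you rule out the ``both negative'' branch (e.g.\ because $d|\beta|/d\theta$ must be positive somewhere by the boundary values $|\beta|\to 0$ and $|\beta|\to 1/\sqrt 3$, and cannot vanish once the two-sided estimate is strict).

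The good news is that the gap closes easily, and your own parametrization makes it nearly trivial. Set $A:=\theta\tan\theta=-s\cot s>0$. The constraint gives $\sin^2 s=s^2/(s^2+A^2)$, $\cos^2\theta=\theta^2/(\theta^2+A^2)$, $2\theta+\sin 2\theta=2\theta(\theta^2+A^2+A)/(\theta^2+A^2)$ and $2s-\sin 2s=2s(s^2+A^2+A)/(s^2+A^2)$, so your formula for $s'(\theta)$ collapses to
\begin{equation*}
s'(\theta)=\frac{s}{\theta}\cdot\frac{\theta^2+A^2+A}{s^2+A^2+A},
\end{equation*}
and \emph{both} inequalities $\theta/s<s'(\theta)$ and $s'(\theta)<s/\theta$ reduce, after clearing positive denominators, to $\theta^2<s^2$, which holds because $\theta<\pi/2<s$. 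With that inserted your proof is complete. For comparison, the paper reaches the same conclusion in three lines: it differentiates $2\psi-|\beta|t_1=\pi$ with respect to $|\beta|$, uses $\psi'(t_1)=|\beta|/(2-n)$ from (\ref{dpsi}), and obtains $t_1=\frac{|\beta|n}{2-n}\,\frac{dt_1}{d|\beta|}$, whence $dt_1/d|\beta|>0$ since $n\in(0,2)$. Your route trades that short computation (which, note, silently treats $\psi$ as depending on $|\beta|$ only through $t_1$) for an explicit uniformization of the system (\ref{sin})--(\ref{cos}) by the monotone bijection $\theta\mapsto s$, which is longer but makes the solution set, the endpoint limits $(|\beta|,t_1)\to(0,\pi)$ and $(1/\sqrt 3,\pi\sqrt 3)$, and hence the continuity claim completely transparent.
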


\begin{proof}
The second statement is evident. The first statement is true, because
$dt_1/d|\beta|>0$ under $0< |\beta|< 1/\sqrt{3}$ in consequence of (\ref{equal}), (\ref{dpsi}), (\ref{mn}), (\ref{to}):
$$t_1+|\beta|\frac{dt_1}{d|\beta|}=2\psi'(t_1)\cdot\frac{dt_1}{d|\beta|}=\frac{2|\beta|}{2-n}\cdot\frac{dt_1}{d|\beta|},$$
$$t_1=\frac{|\beta|n}{2-n}\cdot\frac{dt_1}{d|\beta|}=
\frac{|\beta|\sin^2(t_1\sqrt{1+\beta^2}/2)}{\beta^2+\cos^2(t_1\sqrt{1+\beta^2}/2)}\cdot\frac{dt_1}{d|\beta|}.$$
\end{proof}

\begin{theorem}
\label{diam} $$\diam (SO(3),d)=\pi\sqrt{3}.$$
\end{theorem}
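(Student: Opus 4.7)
The plan is to identify the diameter with the maximum value of the cut--time function $t_1(|\beta|)$ produced by Proposition~\ref{zer1} and Theorem~\ref{monot}, and then to evaluate that maximum.

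First I would reduce the computation to a supremum over $\beta$. Since $SO(3)$ is compact, the sub-Riemannian distance $d$ is complete and the diameter is attained; by left invariance this gives $\diam(SO(3),d) = \max_{g \in SO(3)} d(e,g)$, where the maximizing $g$ is joined to $e$ by a shortest arc. By Theorem~3 of \cite{Ber1} (invoked in the proof of Theorem~\ref{main}) this arc is normal, hence by Theorem~\ref{main} it is of the form (\ref{sol}) for some $\phi_0,\beta$. Remark~\ref{Ad} exhibits an isometry of $(SO(3),d)$ fixing $e$ that carries the geodesic with parameters $(0,\beta)$ to the one with $(\phi_0,\beta)$, so without loss of generality $\phi_0=0$, i.e. the shortest arc is (\ref{ge}).

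Second I would bound the length of such a shortest arc by the cut time. By Proposition~\ref{zer1}, for $\beta\neq 0$ the geodesic (\ref{ge}) ceases to be a shortest arc beyond $t_1(|\beta|)$; by Lemma~\ref{zer}, for $\beta=0$ the noncontinuable shortest arc has length $\pi$. Therefore
$$\diam(SO(3),d) = \sup_{\beta\geq 0} t_1(|\beta|),$$
with the convention $t_1(0):=\pi$.

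Third I would read off the maximum from Theorem~\ref{monot}: $t_1$ increases on $[0,1/\sqrt{3}]$ and decreases on $[1/\sqrt{3},\infty)$, so its global maximum is attained at $|\beta|=1/\sqrt{3}$. Case~I of the classification preceding Theorem~\ref{monot} then evaluates
$$t_1(1/\sqrt{3}) = \frac{2\pi}{\sqrt{1 + 1/3}} = \pi\sqrt{3},$$
which exceeds the $\beta=0$ value $\pi$ and therefore gives the upper bound. The matching lower bound follows because Proposition~\ref{zer1} guarantees that the geodesic (\ref{ge}) with $\beta=1/\sqrt{3}$ is actually a shortest arc on the whole interval $[0,\pi\sqrt{3}]$, so $d(e,\gamma(\pi\sqrt{3}))=\pi\sqrt{3}$.

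The chief obstacle lies not in this final computation but in the preparatory work already carried out in Proposition~\ref{zer1} and Theorem~\ref{monot}, namely establishing that $t_1(|\beta|)$ really is the noncontinuability bound and pinning down its monotonicity. With those two results in hand, the diameter statement is essentially an arithmetic consequence of evaluating $2\pi/\sqrt{1+\beta^2}$ at the critical value $\beta=1/\sqrt{3}$.
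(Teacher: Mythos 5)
Your proposal is correct and follows essentially the same route as the paper: the paper's proof is a two-line appeal to Theorem~\ref{monot} (the cut time $t_1(|\beta|)$ is maximized at $|\beta|=1/\sqrt{3}$ with value $2\pi/\sqrt{1+1/3}=\pi\sqrt{3}$), and you simply make explicit the reduction of the diameter to $\sup_\beta t_1(|\beta|)$ via left invariance, normality of geodesics, and Remark~\ref{Ad}. The extra detail you supply is sound and consistent with what the paper leaves implicit.
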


\begin{proof}
It follows from theorem \ref{monot} that maximal length of shortest arc is attained under $\beta^2=1/3$ and it is equal to $\pi\sqrt{3}.$ This implies
needed statement.
\end{proof}

\begin{remark}
\label{diamo}
Statement of theorem \ref{diam} is a particular case of the first statement of theorem 2 from paper \cite{BerZ}.
\end{remark}

\section{Cut locus and conjugate sets in $(SO(3),d)$}
\label{cut}

Unlike Riemannian manifolds, exponential map $\Exp$ and its restriction $\Exp_x$
for sub-Riemannian manifold $(M,d)$ without abnormal  geodesics (as in the case of $(SO(3),d)$) are defined not on $TM$ and $T_xM$ but only on $D$ and $D(x)$,
where $D$ is distribution on $M,$ taking part in definition of $d.$ Otherwise
cut locus and conjugate sets for such sub-Riemannian manifold are defined in the same way as for Riemannian one \cite{GKM}.

\begin{definition}
\label{cutl}
Cut locus $C(x)$ (respectively conjugate set $S(x)$) for a point $x$ in sub-Riemannian manifolds $M$ (without abnormal geodesics) is the set of ends
of all noncontinuable beyond its ends shortest arcs starting at the point $x$ (respectively, image of the set of critical points of the map $\Exp_x$
with respect to $\Exp_x$).
\end{definition}

\begin{theorem}
\label{clcl}
For every element $g\in (SO(3),d),$ $C(g)=gC(e)$ and $S(g)=gS(e).$ Moreover $S(g)\subset C(g),$
\begin{equation}
\label{cutle}
C(e)=\{\gamma_{\beta}(t_1(\beta)): \beta\in \mathbb{R}\},
\end{equation}
\begin{equation}
\label{conle}
S(e)=\{\gamma_{\beta}(t_1(\beta)): \beta^2\geq 1/3\}= SO(2)\smallsetminus \{e\};
\end{equation}
$S(e)$ is diffeomorphic to $\mathbb{R};$
\begin{equation}
\label{os}
\overline{S(e)}=S(e)\cup \{e\}=SO(2),
\end{equation}
$\overline{S(e)}$ is diffeomorphic to circle $S^1$;
\begin{equation}
\label{ocs}
\overline{C(e)\smallsetminus S(e)}= (C(e)\smallsetminus S(e))\cup \left\{\gamma_{\beta}(t_1(\beta))=
\gamma_{-\beta}(t_1(-\beta)): \beta = \frac{1}{\sqrt{3}}\right\},
\end{equation}
$\overline{C(e)\smallsetminus S(e)}$ is diffeomorphic to real projective plane
$RP^2;$  $C(e)$ is homeomorphic to $RP^2\cup \mathbb{R},$ where $RP^2\cap \mathbb{R}$ is one-point set;
$\overline{C(e)}$ is homeomorphic to $RP^2\cup S^1,$
where $RP^2\cap S^1$ is one-point set.
\end{theorem}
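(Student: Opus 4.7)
The plan is to establish the listed properties in order, leveraging the reduction of any geodesic at $e$ to the form (\ref{ge}) up to the $\Ad(SO(2))$-action. The identities $C(g)=gC(e)$ and $S(g)=gS(e)$ follow at once because left translation $L_g$ is an isometry of $(SO(3),d)$ preserving the distribution $D$, so it carries noncontinuable shortest arcs from $e$ to those from $g$ and critical values of $\Exp_e$ to critical values of $\Exp_g$. The inclusion $S(e)\subset C(e)$ is the standard second-variation fact that a normal geodesic cannot remain locally minimizing past a conjugate time, which applies here since every geodesic of $(SO(3),d)$ is normal by theorem~3 of \cite{Ber1}. Next, Theorem~\ref{main} together with Remark~\ref{Ad} shows that every geodesic from $e$ is $\Ad B$-equivalent, for some $B\in SO(2)$, to one of the special form (\ref{ge}); since each $\Ad B$ is an isometry fixing $e$, both $C(e)$ and $S(e)$ are $\Ad(SO(2))$-invariant. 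The cut time of (\ref{ge}) is the function $t_1(\beta)$ pinned down by Lemma~\ref{zer} (for $\beta=0$) and Proposition~\ref{zer1} (for $\beta\neq 0$), so $C(e)$ equals the $\Ad(SO(2))$-saturation of $\{\gamma_\beta(t_1(\beta)):\beta\in\mathbb R\}$, matching (\ref{cutle}) once the orbit structure is unpacked.

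The identification of $S(e)$ rests on one algebraic observation about case~a) of Proposition~\ref{zer1}. There $t_1(\beta)=2\pi/\sqrt{1+\beta^2}$, so $\exp(t_1(a+\beta c))$ is rotation by a full angle $2\pi$ about the axis of $a+\beta c$ and hence equals $e$; consequently
\[
\gamma_\beta(t_1(\beta))=\exp\!\Bigl(-\tfrac{2\pi\beta}{\sqrt{1+\beta^2}}\,c\Bigr)\in SO(2).
\]
Because $SO(2)$ is abelian, $\Ad B$ fixes this element, so the entire $\phi_0$-family of geodesics with fixed $\beta$ lands at one point, forcing $d\Exp_e$ to drop rank in the $\phi_0$-direction and making the endpoint a conjugate point. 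For case~b) and for $\beta=0$, by contrast, the endpoint lies outside $SO(2)$, the $\Ad(SO(2))$-orbit is a genuine circle, and a direct inspection of formula (\ref{geod}) confirms that $d\Exp_e$ has full rank, so these endpoints are not conjugate. This yields (\ref{conle}). A short computation further shows that $\beta\mapsto -2\pi\beta/\sqrt{1+\beta^2}$ is a diffeomorphism of $[1/\sqrt 3,+\infty)$ onto $(-2\pi,-\pi]$ and of $(-\infty,-1/\sqrt 3]$ onto $[\pi,2\pi)$; modulo $2\pi$ the images cover $SO(2)\setminus\{e\}$ and meet only at the antipode $\exp(-\pi c)$, attained at $|\beta|=1/\sqrt 3$. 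Hence $S(e)\cong\mathbb R$ and $\overline{S(e)}=SO(2)\cong S^1$, giving (\ref{os}).

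For the topology of $\overline{C(e)\setminus S(e)}$, I would parametrize the case-b) endpoints together with $\beta=0$ by $(\phi_0,\beta)\in S^1\times[-1/\sqrt 3,1/\sqrt 3]$ modulo the fixed-point-free involution $(\phi_0,\beta)\sim(\phi_0+\pi,-\beta)$ coming from Remark~\ref{change}; the quotient is a closed M\"obius band $M$, whose single boundary circle (the image of $\beta=\pm1/\sqrt 3$) is further collapsed to the single point $\exp(-\pi c)$ by case~a), since there the endpoint is independent of $\phi_0$. Collapsing the boundary of a M\"obius band to a point produces $\mathbb{RP}^2$, which establishes (\ref{ocs}) and the diffeomorphism $\overline{C(e)\setminus S(e)}\cong\mathbb{RP}^2$. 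Since $\exp(-\pi c)$ is the unique element of $S(e)\cap\overline{C(e)\setminus S(e)}$, the remaining assertions about $C(e)$ and $\overline{C(e)}$ follow by assembling $\mathbb{RP}^2$ with $S(e)\cong\mathbb R$ (respectively $\overline{S(e)}\cong S^1$) glued at this one common point.

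The main obstacle is the rank claim for $d\Exp_e$ in case~b). The Gauss--Bonnet cut-point analysis of Section~\ref{geom} detects cut times but is insensitive to conjugate times, so ruling out interior conjugate points in $0<|\beta|<1/\sqrt 3$ has to be done explicitly---either by computing the Jacobian of $(\phi_0,\beta,t)\mapsto\Exp_e(\phi_0,\beta,t)$ from formula (\ref{geod}) at $t=t_1(\beta)$, or by solving the Jacobi equation along (\ref{ge}) and checking that no nontrivial Jacobi field vanishes before $t_1(\beta)$ when $|\beta|<1/\sqrt 3$.
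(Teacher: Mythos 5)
Your overall route coincides with the paper's: left invariance gives $C(g)=gC(e)$ and $S(g)=gS(e)$; Proposition~\ref{zer1} together with Remark~\ref{Ad} identifies the cut times and hence $C(e)$; the case-a) endpoints $\exp(-2\pi\beta c/\sqrt{1+\beta^2})$ lie in $SO(2)$, are independent of $\phi_0$, and sweep out $SO(2)\smallsetminus\{e\}$; and the involution coming from Remark~\ref{change} produces the $RP^2$. The paper's own proof is only a few lines asserting that all of this follows from Proposition~\ref{zer1}, Remark~\ref{Ad}, and the identity $\gamma_{(\beta,\phi_0)}(t_1(\beta))=\gamma_{(-\beta,-\beta t_1+\phi_0+\pi)}(t_1(-\beta))$ for $\beta^2\le 1/3$; your write-up supplies the computations the paper leaves implicit (the image of $\beta\mapsto -2\pi\beta/\sqrt{1+\beta^2}$ on $|\beta|\ge 1/\sqrt3$, the M\"obius-band-with-collapsed-boundary model of $RP^2$, injectivity considerations), and these check out.

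The one genuine gap is the one you flag yourself and do not close: the inclusion $S(e)\subset\{\gamma_\beta(t_1(\beta)):\beta^2\ge 1/3\}$, i.e.\ that $d\Exp_e$ has full rank at all parameter values whose image lies outside $SO(2)$. Your $\phi_0$-degeneracy argument establishes only one inclusion of (\ref{conle}) (every point of $SO(2)\smallsetminus\{e\}$ is a critical value); without the Jacobian computation from (\ref{geod}) or a Jacobi-field analysis along (\ref{ge}), the equality is not proved. You should know that the paper does not close this gap either: its proof cites Proposition~\ref{zer1}, which concerns cut times only and is, as you correctly observe, insensitive to conjugacy. A smaller logical slip: the ``standard second-variation fact'' gives cut time $\le$ first conjugate time, which by itself does not imply $S(e)\subset C(e)$ (a conjugate point could a priori occur strictly beyond the cut point, hence outside $C(e)$); in this problem the inclusion instead falls out of the explicit identifications of both sets, which is effectively how the paper obtains it as well.
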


\begin{proof}
First statement is a corollary of left invariance of the metric $d$ on $SO(3).$ Inclusion $S(g)\subset C(g),$ formulae
(\ref{cutle}), (\ref{conle}), equality in brace from (\ref{ocs}), and diffeomorphism $S(e)\cong \mathbb{R}$ are corollaries from the proof of
proposition~\ref{zer1} and remark \ref{Ad}. Formula (\ref{os}) and diffeomorphism
$\overline{S(e)}\cong S^1$ follow from formula (\ref{conle}). Equality (\ref{ocs}) follows from formulae (\ref{cutle}), (\ref{conle});
$\overline{C(e)\smallsetminus S(e)}\cong RP^2$ follows from equalities
$\gamma_{(\beta,\phi_0)}(t_1(\beta))=\gamma_{(-\beta,-\beta t_1+\phi_0 + \pi)}(t_1(-\beta))$ при $\beta^2\leq 1/3.$
Now it is not difficult to prove remaining statements.
\end{proof}

\begin{remark}
It follows from (\ref{os}) and equalities $C(g)=gC(e),$ $S(g)=gS(e)$ that
$g\in gSO(2)=\overline{S(g)}\subset \overline{C(g)}$ for all $g\in SO(3),$
while $x\notin \overline{C(x)}$ and $x\notin \overline{S(x)}$ for any point
$x$ of arbitrary smooth Riemannian manifold. This constitutes radical difference
of Riemannian and sub-Riemannian manifolds.
\end{remark}

\end{document}